\newcommand{\R}{\mathbb{R}}
\newcommand{\n}{\hat{\bm{n}}}
\renewcommand{\harvardurl}[1]{\textbf{URL:} \url{#1}}
\newcommand{\bl}[1]{{\color{black} #1}}
\newcommand{\revonealso}[1]{{\color{black} #1}}
\newcommand{\ymp}[1]{{\color{black}{#1}}}
\newcommand{\revone}[1]{{\color{black}{#1}}}
\newcommand{\ympe}[1]{{\color{black}{#1}}}
\newcommand{\pt}[1]{{\color{black} #1}}
\newcommand{\yp}[1]{{\color{black}{#1}}}
\newtheorem{theorem}{Theorem}[section]
\newtheorem{corollary}[theorem]{Corollary}
\newtheorem{remark}[theorem]{Remark}
\newlength{\figwidth}
\newcommand{\qed}{\nobreak \ifvmode \relax \else
      \ifdim\lastskip<1.5em \hskip-\lastskip
      \hskip1.5em plus0em minus0.5em \fi \nobreak
      \vrule height0.75em width0.5em depth0.25em\fi}      
\newcommand{\bdypt}{\bm{p}}
\newcommand{\ve}{\varepsilon}
\newcommand{\x}{\mathbf{x}}
\newcommand{\y}{\mathbf{y}}
\def\namedlabel#1#2{\begingroup
    #2%
    \def\@currentlabel{#2}%
    \phantomsection\label{#1}\endgroup
}
\title[The iPRC of Oscillators in Piecewise Smooth Systems]{The Infinitesimal Phase Response Curves of Oscillators in Piecewise Smooth Dynamical Systems}
\author[Y. Park et al.]{%
  Y.\ns P\ls A\ls R\ls K$\,^{1,3}$,\ns
  K.\ns M.\ns S\ls H\ls A\ls W$\,^{2,4}$\ns
  H.\ns J.\ns C\ls H\ls I\ls E\ls L$\,^2$\ns
\and
  P.\ns J.\ns T\ls H\ls O\ls M\ls A\ls S$\,^{1}$
}
\affiliation{%
  $^1\,$Department of Mathematics, Applied Mathematics, and Statistics. Case Western Reserve University, Cleveland, Ohio, 44106, USA.\\
  $^2\,$Department of Biology Case Western Reserve University Cleveland, Ohio, 44106, USA.\\
  $^3\,$Present address: Department of Mathematics, University of Pittsburgh, Pittsburgh, PA, 15260, USA.\\
  $^4\,$Present address: Department of anesthesia, critical care, small and pain medicine, Massachusetts General Hospital, Boston, MA 02114\\
   emails\textup{: \texttt{yop6@pitt.edu,kmshaw@partners.org,hjc@case.edu,pjthomas@case.edu}}\\}
\begin{document}

% \listoftodos

%\label{firstpage}
\maketitle
%\author{Youngmin Park\thanks{Department of Mathematics, Applied Mathematics, and Statistics. Case Western Reserve University, Cleveland, Ohio, 44106, USA.}}

%\author{\small Youngmin Park\\
%\small Department of Mathematics, Applied Mathematics, and Statistics.\\
%\small Case Western Reserve University,
%Cleveland, Ohio, 44106, USA.\\
%\small Present address: University of Pittsburgh\\
%\small \texttt{YOP6@pitt.edu}\\
%\small Kendrick M.~Shaw\\
%\small Department of Biology\\
%\small Case Western Reserve University
%\small Cleveland, Ohio, 44106, USA.\\
%\small Present address: Department of anesthesia, critical care, \\
%\small and pain medicine, Massachusetts General Hospital, Boston, MA 02114\\
%\small \texttt{kmshaw@partners.org}\\
%\small Hillel J.~Chiel\\
%\small Department of Biology\\
%\small Case Western Reserve University
%\small Cleveland, Ohio, 44106, USA\\
%\small \texttt{hjc@case.edu}\\
%\small Peter J.~Thomas\\
%\small Department of Mathematics, Applied Mathematics, and \small Statistics.\\
%\small Case Western Reserve University,
%\small Cleveland, Ohio, 44106, USA\\
%\small \texttt{pjthomas@case.edu}}

\begin{abstract}
\revonealso{The asymptotic phase $\theta$ of an initial \pt{point} $x$ in the stable manifold of a limit cycle identifies the phase of the point on the limit cycle to which the flow $\phi_t(x)$ converges as $t\to\infty$.  The infinitesimal phase response curve (iPRC) quantifies the change in timing due to a small perturbation of a limit cycle trajectory.
For a stable limit cycle in a smooth dynamical system the iPRC is the gradient $\nabla_x(\theta)$ of the phase function, which can be obtained \textit{via} the adjoint of the variational equation.  For systems with discontinuous dynamics, the standard approach to obtaining the iPRC fails.}
We derive a formula for the infinitesimal phase response curves (iPRCs) of limit cycles occurring in piecewise smooth \revonealso{(Filippov)} dynamical systems \revonealso{of arbitrary dimension}, subject to a transverse flow condition. Discontinuous jumps in the iPRC can occur at the boundaries separating subdomains\revonealso{, and are captured by a linear matching condition. The matching matrix, $M$, \revonealso{can be derived from the saltation matrix arising in the associated variational problem}.  For the special case of linear dynamics away from switching boundaries,} we obtain an explicit expression for the iPRC.  We present examples from cell biology (Glass networks) and neuroscience (central pattern generator models).  We apply the iPRCs obtained to study synchronization and phase-locking in piecewise smooth limit cycle systems \pt{in which synchronization arises solely due to the crossing of switching manifolds.}   \end{abstract}

\begin{keywords}
Mathematical biology, nonsmooth analysis, phase plane analysis, nonlinear oscillators

2010 \emph{Mathematics Subject Classification} 92B99, 49J52, 70K05, 34C15

\end{keywords}

\section{Introduction}

\subsection{Overview}

A stable limit cycle is a closed, isolated periodic orbit in a nonlinear dynamical system that attracts nearby trajectories  \cite{Guckenheimer+Holmes1990}.  Limit cycles arise in models of biological motor control systems \cite{Ijspeert:2008:NeuralNet,KelsoHoltRubinKugler1981JMotorBehav}, excitable membranes  \cite{Ermentrout1996NeuralComput,Izhikevich2007},  sensory systems \cite{FruthJuelicherLindner2014BPJ}, neuropathologies such as Parkinsonian tremor \cite{ModoloHenryBeuter2008JBiolPhys} and epilepsy \cite{SoFrancisNetoffGluckmanSchiff1998BPJ}.
%\endnote{See the intro to \cite{LinWedgwoodCoombesYoung2012JMathBiol} which has a nice selection of references also.}   
Chemical oscillations arise when the differential equations describing mass action kinetics admit a limit cycle \cite{FieldNoyes1974JChemPhys}.
Limit cycle dynamics appear not only in biological but also in engineered systems.  For instance, phase locked loops play a role in radio and electronic communications devices \cite{Stensby1997PLL_book}, and control of oscillations is an important problem in mechanical and electrical engineering \cite{RohdenSorgeTimmeWitthaut2012PRL}. 

Formally, a nonlinear autonomous $n$-dimensional ordinary differential equation,
\begin{equation}\label{eq:ode}
\frac{d}{dt}\revone{\bm{x}}(t) = \revone{\bm{F}}(\revone{\bm{x}}(t)),
\end{equation}
has a \revonealso{stable} $T$-periodic limit cycle if it admits a periodic solution $\revone{\bm{\gamma}}$ with minimal period 
\begin{equation}\label{eq:limit-cycle}
  \revone{\bm{\gamma}}(t) = \revone{\bm{\gamma}}(t+T),\quad \forall t\in \mathbb{R},
\end{equation}
and an open neighborhood of $\revone{\bm{\gamma}}$ (the basin of attraction, B.A.) within which all initial conditions give solutions converging, as $t \rightarrow \infty$, to the set
  \begin{equation}\label{eq:big-gamma}
  \Gamma = \{ \revone{\bm{\gamma}}(s) : s \in [0,T) \}.
  \end{equation}

In many situations the multidimensional dynamics of a stable limit cycle oscillator can be accurately captured in a one dimensional phase model, representing the fraction of progress around the limit cycle.  The effect of weak inputs on the oscillator can be represented in terms of their effect on the timing of the limit cycle alone, the linear approximation to which is the \emph{infinitesimal phase response curve} (iPRC).  The iPRC has become a fundamental tool for understanding entrainment and synchronization phenomena in weakly driven and weakly coupled oscillator systems, respectively \cite{ErmentroutTerman2010book,SchwemmerLewis2012PRCchapter}. The iPRC gives the relative shift in timing per unit stimulus, as a function of the phase at which the stimulus occurs, in the limit of small stimulus size (Figure \ref{fig:1d_example}\yp{B}; \cite{ErmentroutTerman2010book}). 
\revonealso{The iPRC $\bm{z}(t)$ is a vector quantity; it is equivalent to the gradient of the asymptotic phase function $\theta:\text{B.A.}\to[0,1)$. \yp{The phase function $\theta$ maps each point in the basin of attraction to a point on the circle labeling the limit cycle trajectory to which it converges as $t\to\infty$. In particular, the labeling is simply chosen to be $\theta = t/T \mod T$.} A displacement from the limit cycle by amount $\Delta\bm{y}$ at time $t\in[0,T)$ causes a shift in timing equal to $T\Delta\bm{y}\cdot \bm{z}(t) + o(|\Delta\bm{y}|)$, in the limit as $|\Delta\bm{y}|\to 0$.}

The iPRC is known in closed form in a handful of special cases: near a supercritical Andronov-Hopf bifurcation \cite{Izhikevich2007}, near a saddle-node-on-invariant-circle (SNIC) bifurcation \cite{BrownMoehlisHolmes2004NeComp}, and for certain piecewise linear oscillator models \cite{Coombes:2008:SIADS,CoombesThulWedgwood2012PhysD,ShawParkChielThomas2012SIADS}.  The  form of the iPRC near a  homoclinic bifurcation is not known in general, cf.~\cite{BrownMoehlisHolmes2004NeComp,LinWedgwoodCoombesYoung2012JMathBiol,ShawParkChielThomas2012SIADS}.

%%% aplysia/papers/yxp30/j-math-neuro/glass\_pert\_displacement\_fig.pdf

For general smooth nonlinear systems with limit cycle dynamics, one may obtain the iPRC numerically using an adjoint method \cite{IzhikevichErmentrout2008Scholarpedia} \revonealso{or via continuation of a two-point boundary value problem \cite{OsingaMoehlis2010SIADS}}.  If oscillations arise from a dynamical system \eqref{eq:ode} where $\revone{\bm{F}}:\R^n\to\R^n$ is a $C^1$ differentiable map,  then the iPRC is a $T$-periodic vector function of time that obeys an adjoint equation 
\begin{equation}
d\revone{\bm{z}}/dt=A(t)\revone{\bm{z}}(t),\quad A(t)=-\left(DF(\revone{\bm{\gamma}}(t))\right)^\intercal
\end{equation}
together with the boundary condition $\revone{\bm{z}}(t)=\revone{\bm{z}}(t+T)$.  The ``adjoint operator" $A(t)$ is the transpose of the \yp{negative} Jacobian matrix $DF$ evaluated at the limit cycle $\revone{\bm{\gamma}}(t)$ \cite{ErmentroutTerman2010book,Izhikevich2007}.  

\revonealso{Both the adjoint equation method and the continuation-based method break down} for nonsmooth systems, i.e.~for systems such that the Jacobian $DF$ is not defined at all points around the limit cycle. \revonealso{Such cases arise in piecewise smooth systems, where the vector field changes abruptly across some boundary $\Sigma$. The monograph  \cite{BernardoBuddChampneysKowalczyk2008PiecewiseSmoothDynSysBook} classifies piecewise smooth systems according to the degree of smoothness at the boundaries. If across a boundary $\Sigma$, the vector fields $F_1,F_2$ are discontinuous ($F_1(\x) \neq F_2(\x)$) at a point $\x \in \Sigma$, then the system is said to have degree of smoothness one, and are said to be of \textit{Filippov} type (also called differential inclusions \cite{filipov1988}; the derivative of a solution passing through such a point may take a value in some well-defined set, rather than equalling a unique value). If the vector fields across the boundary satisfy $F_1(\x) = F_2(\x)$, but differ in their Jacobians ($DF_1 \neq DF_2$) at $\x$, then the degree of smoothness is said to be 2. This definition generalizes to higher order derivatives in a natural way. Systems with smoothness degree two or higher are called \textit{piecewise-smooth continuous systems} (\cite{BernardoBuddChampneysKowalczyk2008PiecewiseSmoothDynSysBook}, p.~74). Finally, a system with a discontinuous solution at the boundary $\Sigma$ is said to have degree of smoothness zero. In this paper we consider systems with smoothness one or higher, i.e.~we assume the solutions are continuous functions of time.}

\revonealso{If $F$ is either a Filippov system or piecewise smooth continuous system}, the Jacobian linearization may break down at the boundaries separating the regions within which the vector field is smooth. The theory of isochrons, and the subsequent iPRC, is well developed for smooth systems \cite{Guckenheimer1975JMathBiol}, with considerably less literature for iPRCs in nonsmooth differential equations. Infinitesimal PRCs have been computed explicitly in some planar systems \cite{Coombes:2008:SIADS,ShawParkChielThomas2012SIADS}. 
\pt{Recent literature shows a significant interest in the analysis of iPRCs for piecewise smooth systems in both biological and control engineering contexts \cite{coombes2001phase,CoombesThulWedgwood2012PhysD,izhikevich2000phase,shirasaka2017phase}. We compare these studies in detail in \S \ref{ssec:related_literature}.}

Nonsmooth oscillator models arise in both biological and engineered systems. %\footnote{Not to mention physical systems!  PRE is a physics journal after all.  Try google search on ``nonsmooth oscillator physics limit cycle'' (google and also google scholar?). And references in \cite{KuznetsovRinaldiGragnani2003IJBC}.}  
Examples include planar nonlinear integrate-and-fire neural models \cite{CoombesThulWedgwood2012PhysD},  piecewise linear approximations to the Hindmarsch-Rose neural model \cite{poggi_etal_2009}, and models of anti-lock braking systems \cite{morse_1997,pettit_wellstead_1995}. 
Section \S \ref{ssec:further_applications}  mentions additional examples.  
Existence of oscillatory solutions in piecewise smooth systems is a question of interest in its own right \cite{filipov1988,huan_etal_2012,LlibrePonce2012DCDISSBAA,morse_1997,gaiko_van_horssen_2009}.
    
In this paper we \pt{derive} a formula for the infinitesimal phase response curve of stable limit cycles that allows for  discontinuities at the domain boundaries of piecewise smooth dynamical systems. In the case of piecewise \emph{linear} systems, we obtain an explicit expression in terms of the system coefficients for each subdomain through which the limit cycle travels, and the tangent vectors of the surfaces separating the regions where the vector field definition changes (equation \eqref{eq:iprc_exact} in \S \ref{sec:theorem}).  To obtain these results we derive a jump condition satisfied by the iPRC at the boundaries between subdomains.   

\revonealso{\textbf{Overview of the paper:} In the next section \S 1.2 we develop a motivating one-dimensional example in detail.  In \S 2 we calculate the form of the discontinuity in the infinitesimal phase response curve for a limit cycle in a piecewise smooth dynamical system in arbitrary dimensions. In the case of limit cycles arising in $n$-dimensional piecewise \emph{linear} dynamical systems, we provide an explicit closed form for the iPRC. \S 2.1 lays out the assumptions needed to establish our results, and \S 2.2 presents the main Theorem (2.1) giving the correction to the iPRC upon crossing a switching boundary. \S 3 provides examples of iPRCs in nonsmooth systems: \yp{a planar piecewise constant system where the nonlinearities arise strictly from the boundaries in \S 3.1, a planar piecewise linear oscillator introduced in a motor control context \cite{ShawParkChielThomas2012SIADS}, but generalized here to a non-symmetric geometry in \S 3.2, a piecewise linear genetic regulatory circuit model (Glass network \cite{GlassPasternack1978JMB,GlassPerez1974JChemPhys}) in \S 3.3, a three-dimensional motor control model \cite{ShawLyttleGillCullinsMcManusLuThomasChiel2015JCNS} in \S 3.4, a four-dimensional weakly diffusively coupled version of the piecewise constant system in \S 3.4.1, and a six-dimensional threshold linear network model comprising of two weakly coupled three-dimensional oscillators \cite{MorrisonDegeratuItskovCurto2016arXiv} in \S 3.4.2.}  In \S 4.1 we discuss the relation between our boundary-crossing correction matrix and the classical saltation matrix, in \S 4.2 we discuss the limitations of the method, and  in \S 4.3 we discuss a range of possible further applications.  Following our conclusion  \S 5, the appendices detail the proofs and derivations of the  results.}

The main results reported here appeared previously in the Master's thesis of the first author \cite{park2013infinitesimal}.

\begin{figure}[htbp] %  figure placement: here, top, bottom, or page
   \centering
   \includegraphics[width=\textwidth]{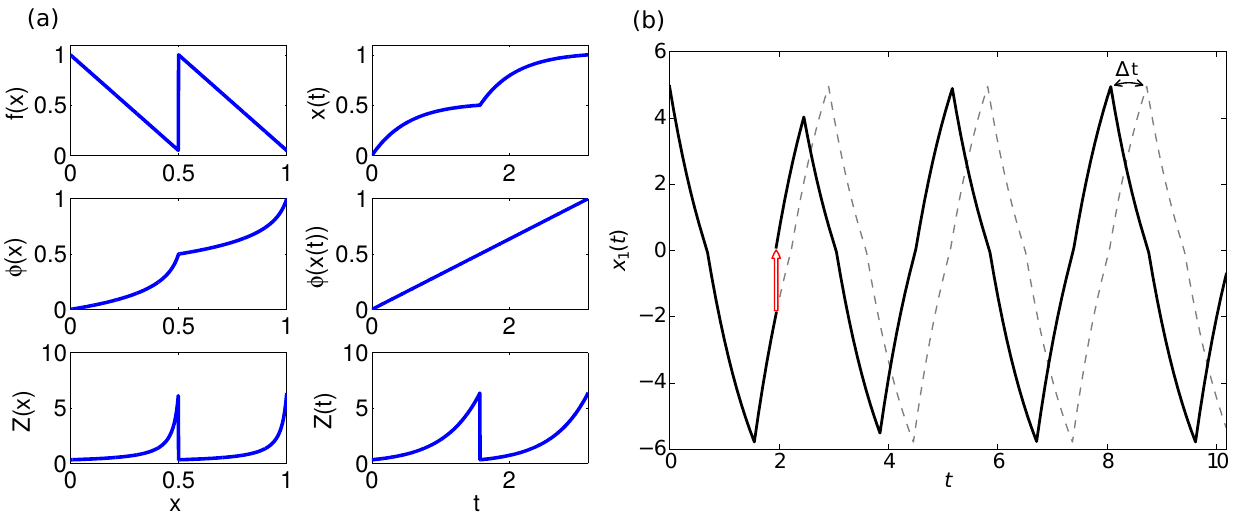} 
   \caption{\bl{\yp{1D oscillator with piecewise smooth velocity. \textbf{A}. \textbf{Top left:} Velocity $f(x)=1-2\alpha x$ for $0\le x < 1/2$ and $f(x)=1-2\alpha(x-1/2)$ for $1/2\le x < 1$, with $\alpha=0.95$. The  period is $T=\frac{1}{\alpha}\ln\left(\frac{1}{1-\alpha}\right)\approx 3.153$. \textbf{Middle right:} trajectory $x(t)$. \textbf{Center:} oscillator phase $\phi$ plotted versus space ($x$, \textbf{left}) and versus time ($t$, \textbf{right}).  \textbf{Bottom:} infinitesimal phase response curve $Z$ plotted versus space ($x$, \textbf{left}) and versus time ($t$, \textbf{right}).  The phase $\phi$ is a continuous mapping from $x$ to the circle $[0,1)$. The iPRC jumps by $-\frac{\alpha^2}{1-\alpha}\left[ \ln\frac{1}{1-\alpha}\right]^{-1}\approx-6.025$ at $x=1/2$ and again at  $x=0$. \textbf{B}. Direct perturbation (red arrow) and phase response ($\Delta \theta$) for a limit cycle solution of a 2-D Glass network model. For a perturbation of size $\varepsilon$, we recover the iPRC value  as  $\lim_{\varepsilon \rightarrow 0} \Delta\theta/\varepsilon$ (\cite{ErmentroutTerman2010book}). The $\Delta t$ in the figure is equivalent to $\Delta \theta T$, where $T$ is the oscillator period.}}}
   \label{fig:1d_example}
\end{figure}

\subsection{A 1D example} To illustrate the necessity of  including a jump condition in the phase response for piecewise smooth systems, consider the following one-dimensional example \pt{(see also \cite{coombes2001phase,izhikevich2000phase})}.  

Let $f_1$ and $f_2$ be smooth, strictly positive functions defined on the unit interval.  Identify the  interval with the circle and let $x\in[0,1)$ evolve according to
\begin{equation}
\frac{dx}{dt}=\left\{\begin{array}{rr}f_1(x),&0\le x <a\\f_2(x),&a\le x <1,
\end{array}
\right.
\end{equation}
where $0<a<1$ marks the location at which the rate law for $x$ changes from $f_1$ to $f_2$.  The rate law changes back to $f_1$ when $x$ wraps around from one to zero again.  The period of this oscillator is 
$$T=\int_0^a\frac{dx}{f_1(x)}+\int_a^1\frac{dx}{f_2(x)}$$
We can define a phase variable $\phi(x)$ by the condition $d\phi/dt=1/T$, which gives the form
$$\phi(x)-\phi(0)=\left\{\begin{array}{rr}
\frac1T\int_0^x\frac{\revonealso{dx'}}{f_1(\revonealso{x'})},&0\le x \le a\\
\phi(a)+\frac1T\int_a^x\frac{\revonealso{dx'}}{f_2(\revonealso{x'})},&a<x\le 1
\end{array}
\right.
$$
Here $\phi(0)$ is an arbitrary constant which we may set to zero, without loss of generality.  
The infinitesimal phase response curve $Z_x$ for this system describes the shift in timing of the oscillation upon making a small displacement in the $x$ coordinate, as a function of position.  The iPRC is 
\begin{equation}
Z_x=\frac{d\phi}{dx}=\left\{  
\begin{array}{rr}
Z_1\equiv(Tf_1(x))^{-1},&0\le x < a\\
Z_2\equiv(Tf_2(x))^{-1},&a<x\le 1.
\end{array}
\right.
\end{equation}
The iPRC has a finite jump discontinuity at the location $a$ where the rate law for $x$ changes, namely
\begin{equation}
Z_2(a)-Z_1(a)=\frac1T\left( \frac1{f_2(a)}-\frac1{f_1(a)}  \right).
\end{equation}
\bl{As a specific example, consider the rate laws  $f_1(x)=1-2\alpha x$, $f_2(x)=1-2\alpha(x-1/2)$, parameterized by $\alpha<1$, with switching point  $a=1/2$.
For this example $T=\frac{1}{\alpha}\ln\left(\frac{1}{1-\alpha}\right)$, $\phi(x)=\frac{-1}{2\ln (1-\alpha)}\ln\left(\frac1{1-2\alpha x}\right)$ for $0\le x \le 1/2$, and $\phi(x)=\frac12 -\frac{1}{2\ln (1-\alpha)}\ln\left(\frac1{1-2\alpha (x-1/2)}\right)$ for $1/2\le x < 1$.  The phase response curves in the first and second intervals are $Z_1(t)=\frac\alpha{1-2\alpha x(t)}\left[\ln\frac1{1-\alpha}\right]^{-1}$ and $Z_2(t)=\frac\alpha{1-2\alpha (x(t)-1/2)}\left[\ln\frac1{1-\alpha}\right]^{-1}$, respectively.  The phase $\phi(x)$ is continuous across the switch points $x=1/2$ and $x=0$. The jump in the infinitesimal phase response curve is $Z_2(T_a)-Z_1(T_a)=-\frac{\alpha^2}{1-\alpha}\left[ \ln\frac{1}{1-\alpha}\right]^{-1}$; here $T_a=T/2$ is the time at which the trajectory reaches the switching point $a=1/2$.  Figure \ref{fig:1d_example} illustrates this scenario for $\alpha=0.95$.  }

\yp{We remark that the size of the discontinuity in this example could be computed explicitly because the domain is one-dimensional, and the normalization condition $d\phi/dt = 1/T$ yields one equation with one unknown for the size of the discontinuity. However, in $n > 1$ dimensions, the normalization condition only yields one equation in $n$ unknowns. In the following section, we derive the remaining $n-1$ equations required to determine the size of the discontinuity in the general case.}

\section{Methods}
\subsection{Definitions and Hypotheses Required to Solve the Adjoint Equation Over Differential Inclusions}\label{section:definitions-and-hypotheses}

We introduce notation needed to discuss infinitesimal phase  response curves for differential inclusion systems.  For a general treatment of Filippov systems see \cite{filipov1988}.
  
\emph{Limit cycles and asymptotic phase.}  In a smooth system of the form \eqref{eq:ode}, possessing a stable limit cycle $\revone{\bm{\gamma}}$, we associate a phase $\theta \in [0,1)$ with points along the  cycle such that
\begin{equation}\label{eq:phase}
  d\theta(\revone{\bm{\gamma}}(t))/dt = 1/T,
\end{equation}
%
% This figure is not referred to in the text -PJT 2016-03-07
%\begin{figure}[ht]
%\includegraphics[width=\linewidth]{glass_2d_phase_fig.pdf}
% \caption{Each point on the limit cycle is assigned a phase value.  The top panel shows each state variable of the limit cycle trajectory, $x_1$ (black) and $x_2$ (gray) as a function of time.  The period of the limit cycle is shown in the same panel as $T \approx 2.9$.  The lower panel plots the phase, which evolves at a constant rate, \textit{cf.}~Eq.~\eqref{eq:phase}.}
%\end{figure}
%\todo{remove vertical lines from glass\_2d\_phase\_fig.pdf}
where $T$ is the period of the limit cycle and $\theta(\revone{\bm{\gamma}}(t_0)) = 0$ is chosen arbitrarily. To each point $\revone{\bm{x}}_0$ in the basin of attraction (B.A.) we assign a phase $\theta(\revone{\bm{x}}_0)\in[0,1)$ such that the trajectory $\revone{\bm{x}}(t)$ with initial condition $\revone{\bm{x}}(0)=\revone{\bm{x}}_0$ satisfies
\begin{equation}\label{eq:thetax0}
    \lim_{t \rightarrow \infty} \| \revone{\bm{x}}(t) - \revone{\bm{\gamma}}(t + T\theta(\revone{\bm{x}}_0)) \| \rightarrow 0.
  \end{equation}
The isochrons are level curves of the phase function $\theta(\revone{\bm{x}}_0)$, and foliate the basin of attraction. For a stable limit cycle in a smooth dynamical system, the existence of the phase function is a well known consequence of results from invariant manifold theory \cite{Guckenheimer1975JMathBiol}.
Intuitively, isochrons indicate which points in the basin of attraction eventually converge to the limit cycle solution having a particular phase.

\emph{Filippov systems.}  Let $D$ be a path connected subset of $ \mathbb{R}^n$.  We say that an autonomous vector field $\bm{F}:D \rightarrow \mathbb{R}^n$ is piecewise smooth on $D$ if there exist a finite number, $R$, of open sets $D_r$ such that the following hypotheses hold:
  \begin{enumerate}[label=\arabic*.]
    \item[H1.] \label{h1}$D_r$ is nonempty, simply connected, and open for each $r$.
    \item[H2.] \label{h2}$D_i \cap D_j = \varnothing, \forall i \neq j$.
    \item[H3.] \label{h3}$D \subset \bigcup_{r=1}^R \bar{D}_r$.
    \item[H4.] \label{h4}There exist \revonealso{$C^1$}, bounded vector fields $\bm{F}_r: \bar{D}_r \rightarrow \mathbb{R}^n$ such that for all $x$ in $D_r$, $\bm{F}_r(x)=\bm{F}(x)$.
  \end{enumerate}
  Note that we require $\bm{F}_r=\bm{F}$ only on the interior of each open domain $D_r$, while we require that each $\bm{F}_r$ be smooth on the closure $\bar{D}_r$.
  
The corresponding dynamical system 
\begin{equation}\label{eq:Filipovsystem}
\frac{d\revone{\bm{x}}}{dt}=\bm{F}(\revone{\bm{x}})
\end{equation}
is called a piecewise smooth dynamical system or a Filippov system \cite{filipov1988}.

\begin{figure*}[htbp] %  figure placement: here, top, bottom, or page
   \centering
   \includegraphics[width=4in]{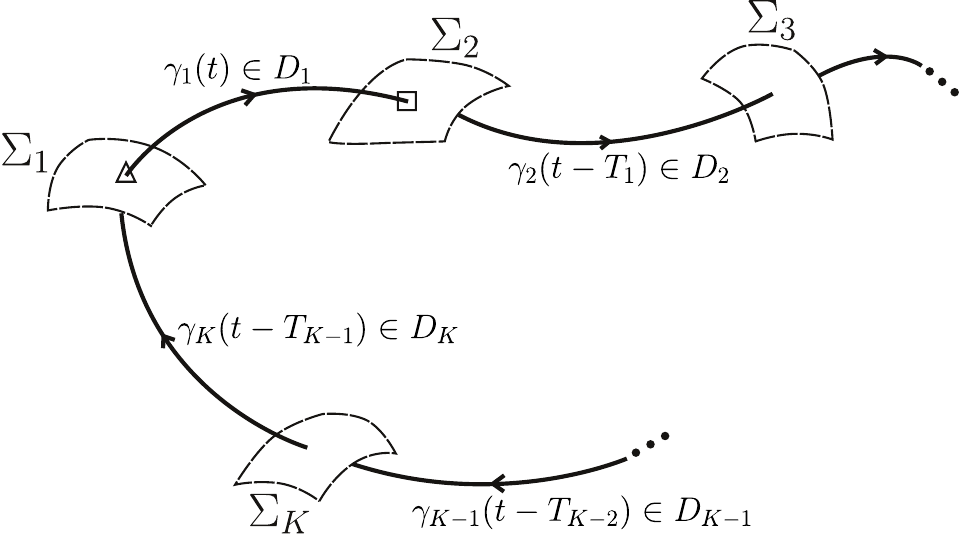} 
   \caption{Sections of the piecewise smooth dynamical system. Full domain boundaries are omitted for clarity.   The first segment of the limit cycle begins at point $\revone{\bm{\gamma}}_1(0)\equiv\bdypt_{1}$ (marked $\triangle$) on the surface $\Sigma_{1}$.   At time $T_1=t_1$ the trajectory crosses surface $\Sigma_{2}$ at point $\revone{\bm{\gamma}}_1(t_1)\equiv\bdypt_{2}\equiv\revone{\bm{\gamma}}_2(0)$ (marked $\square$).   The $j$th segment of the limit cycle travels from $\revone{\bm{\gamma}}_j(0)\in\Sigma_{j}$ to $\revone{\bm{\gamma}}_j(t_j)\in\Sigma_{j+1}$ in time $t_j$;    the crossing from region $j$ to region $j+1$ occurs at \bl{global} time $T_j=t_1+t_2+\ldots+t_j$ and at location $\revone{\bm{\gamma}}_j(t_j)\equiv\bdypt_{j+1}\equiv\revone{\bm{\gamma}}_{j+1}(0)$.  The cycle returns to its starting point, $\bdypt_{1}$ (marked $\triangle$), at time $T_K=t_1+\ldots+t_K$.    Thus $T=T_K$ is the period of the limit cycle.}
   \label{fig:sections}
\end{figure*}
  We further restrict our attention to Filippov systems satisfying the following assumptions:
  \begin{enumerate}[label=\arabic*.]
  \item \label{a:limit_cycle} \textit{The system \eqref{eq:Filipovsystem} has a stable $T$-periodic limit cycle that crosses each boundary transversely with nonzero speed.}
  Because the limit cycle could cross the same boundary multiple times, we introduce a separate label for each segment of the limit cycle lying between two boundary crossings (Figure \ref{fig:sections}).  Thus, we label each piecewise smooth portion of the limit cycle by a number $k=1,\ldots,K$ (see Eq.~\eqref{eq:gamma_pieces}), \yp{and use the same convention of the regions}. \revonealso{The boundary between the $k^\text{th}$ and $(k+1)^\text{st}$ portions of the limit cycle \yp{(between regions $D_k$ and $D_{k+1}$)} is a surface denoted $\Sigma_{k+1}$}.  We denote the point at which the limit cycle crosses this boundary by $\bdypt_{k+1} \in \Sigma_{k+1}$. 
  
    \newcommand{\tvi}{\hat{\bm{w}}_{i}} % tangent vector i  
  \item \label{a:sigma} 
  \textit{Each boundary is at least $C^1$ in an \bl{open ball  $B(\bdypt_{k+1},c)$, centered at $\bdypt_{k+1}$ with radius $c$.} } From this assumption it follows that at each crossing point $\bdypt_{k+1}$ there exists a tangent hyperplane spanned by $n-1$ orthonormal basis vectors, denoted
   $\tvi^{k+1}$ for $i=1,\ldots,n-1$, and a unique normal vector $\n_{k+1}$ \bl{directed from region $k$ towards region $k+1$}. Moreover, the vector fields $\bm{F}_{k}$ and $\bm{F}_{k+1}$ are assumed to satisfy a transverse crossing condition: 
  \begin{align}
  \bm{F}_k(\bdypt_{k+1})\cdot\n_{k+1}&>0\\
  \bm{F}_{k+1}(\bdypt_{k+1})\cdot\n_{k+1}&>0.
  \end{align}
     
  \item \label{a:phase}\textit{\revonealso{The limit cycle of system \eqref{eq:Filipovsystem} admits} a phase function that can be extended to a continuous function $\theta:$B.A.$\to S^1$ from the basin of attraction to the circle $S^1\equiv [0,1],$ satisfying 
  $$\frac{d}{dt}\theta(\revone{\bm{x}}(t))=\frac1T$$ along trajectories within the basin of attraction.}%  See Eqs.~\eqref{eq:limit-cycle} and ~\eqref{eq:phase} for definitions of limit cycle and phase.
%    
%    Suppose that $\gamma(t)$ is a T-periodic limit cycle solution satisfying a particular differential inclusion.  We adopt the definition of phase from smooth systems; it is a function $\theta: \gamma \rightarrow S^1 = [0,1)$ such that
%    \begin{equation}
%     \frac{d}{dt}\theta(\gamma(t)) = 1/T,
%    \end{equation}
%    where $\theta=0$ is chosen arbitrarily.

%   \item \label{a4} \textbf{REMOVE -- combined with \# 3.  But check references to assumption numbers.}  \textit{A unique asymptotic phase, $\theta$, exists for each point, $x_0$, in the basin of attraction (B.A.), and
%   \begin{equation}
%     \frac{d}{dt}\theta(x(t)) = 1/T, \quad \forall x(t) \in B.A.,
%   \end{equation} where $x(0) = x_0$.
%   }
  \item \label{a:isochron} \textit{The level sets of the phase function $\theta$ (the isochronal surfaces) form a continuous foliation of \revonealso{an open neighborhood of the limit cycle.}}
  %, i.e., for each $x \in B.A.$, \begin{equation}
% \forall \varepsilon > 0, \exists \delta >0 \rm{ s.t. } y \in B(x,\delta) \Rightarrow d(\theta(x),\theta(y)) < \varepsilon.
% \end{equation}
\item \label{a:phase_deriv}\textit{The phase function, $\theta$, is differentiable within the interior of each region for which it is defined.}

\item \label{a:direct_deriv}\textit{At each boundary crossing point $\bdypt_{k+1}\in\Sigma_{k+1}$, the directional derivative of the phase function is defined in the directions of each of the $n-1$ tangent vectors $\tvi^{k+1}$.}

\end{enumerate}

\yp{\begin{remark} Although we label conditions 1-6 as ``assumptions", it has been recently established that conditions 3-6 follow from conditions 1-2 \cite{shirasaka2017phase}.  All six conditions hold for the model systems we consider in the examples.
%, the authors define and show existence of isochrons for general hybrid systems with transverse solution crossings, showing that Assumptions \ref{a:phase}--\ref{a:direct_deriv} exist in general. The examples we consider are included in their derivations, thus we freely use Assumptions \ref{a:phase}--\ref{a:direct_deriv} in each example.
    \end{remark}
}

For smooth systems the iPRC $\revone{\bm{z}}(t)=\nabla\theta(\revone{\bm{\gamma}}(t))$ may be found using an adjoint equation,
\begin{equation}\label{eq:adjoint}
 \frac{d\revone{\bm{z}}(t)}{dt} = A(t) \revone{\bm{z}}(t),
\end{equation}
where $A(t) = -DF^T(\revone{\bm{\gamma}}(t))$, the negative transpose of the linearization of the vector field $\revone{\bm{F}}$ evaluated along the limit cycle $\revone{\bm{\gamma}}$. 
To derive \eqref{eq:adjoint} one considers an infinitesimal perturbation $\revone{\bm{x}}(t)=\revone{\bm{\gamma}}(t)+\revone{\bm{y}}(t)$ where $||\revone{\bm{y}}(t)||\ll 1$ and $\revone{\bm{\gamma}}(t)$ is the limit cycle.  As observed in \cite{BrownMoehlisHolmes2004NeComp}, $\revone{\bm{y}}(t)\cdot \revone{\bm{z}}(t)$ is independent of time; setting 
\begin{equation}\label{eq:deriveadjoint}
\frac{d}{dt}\left(\revone{\bm{y}}(t)\cdot \revone{\bm{z}}(t)\right)=0
\end{equation}
gives an operator equation %$L^*[z(t)]=0$ \footnote{Why is this $L^*$? This notation is not used anywhere else in the paper and is not clearly defined here.  Remove.}  
leading to the adjoint.  We note that \eqref{eq:deriveadjoint} holds for piecewise smooth systems \emph{within the interior of each subdomain}.  We will develop a parallel method for piecewise smooth vector fields and solve for each limit cycle section $\revone{\bm{\gamma}}_k$, that is,
  \begin{equation}\label{eq:adjoint-piecewise}
   \frac{d\bm{z}_k(t)}{dt} = A_k(t) \bm{z}_k(t),
  \end{equation}
where $A_k(t) = -DF_k(\revone{\bm{\gamma}}_k(t))^T$, the negative transpose of the linearization of the vector field $\bm{F}_k$ evaluated along the limit cycle portion $\revone{\bm{\gamma}}_k$.
The remaining challenge, and the contribution of the paper, is to establish the  conditions relating the iPRC on either side of each boundary crossing.
  
 %However, numerical simulations show discontinuities in the iPRC for piecewise linear vector fields \cite{ShawParkChielThomas2012SIADS}, but the adjoint equation, Eq.~\eqref{eq:adjoint-piecewise}, does not reveal how these discontinuities arise and how to account for them in deriving an accurate iPRC.  In the next section, we present in detail the calculations that must be done to account for these discontinuities for any system of piecewise linear differential equations satisfying the hypotheses and assumptions above.
  
\subsection{Solving the Boundary Problem of the Adjoint Equation}\label{sec:theorem}
  
We fix notation and define additional terms. Let $\bm{F}_k$ denote the vector field in which the $k$th portion of the limit cycle resides, where each $\bm{F}_k$ is numbered sequentially.
%\sout{according to the direction of flow within the ball $B(\bdypt_{k+1},c)$ centered at $\bdypt_{k+1}$ with radius $c>0$.}\footnote{PT: as defined earlier, $B(\bdypt_{k+1},c)\subset\Sigma_{k+1}$, so it is flat and has no interior.  So ``the direction of flow within the ball" is meanginless".}  \sout{Note that while each $\bm{F}_r$ is unique using the domain numbering $r$, the same is not necessarily true for each $\bm{F}_k$, using the limit cycle crossing numbering $k$.}

The limit cycle, $\revone{\bm{\gamma}}$, is piecewise smooth, consisting of several curves $\revone{\bm{\gamma}}_1, \revone{\bm{\gamma}}_2, \ldots, \revone{\bm{\gamma}}_K$. As illustrated in Figure \ref{fig:sections}, each $\revone{\bm{\gamma}}_k$ spends a time $t_k$ in some domain $D_r$.  We write the limit cycle $\revone{\bm{\gamma}}$ as a collection of curves,
\begin{eqnarray}\label{eq:gamma_pieces}
  \revone{\bm{\gamma}}(t) = \left\{
  \begin{aligned}
    &\revone{\bm{\gamma}}_1(t), \quad 0=T_0 \leq t < T_1,\\
    &\revone{\bm{\gamma}}_2(t-T_1), \quad T_1 \leq t < T_2,\\
    &\vdots\\
    &\revone{\bm{\gamma}}_K \left ( t - T_{K-1} \right ), \quad T_{K-1} \leq t < T_K,
  \end{aligned}
  \right.
\end{eqnarray}
where $T_i = \sum_{j=1}^i t_j$ is the global time at which the trajectory crosses boundary surface $\Sigma_{i+1}$, and $\revone{\bm{\gamma}}_k(t_k) = \revone{\bm{\gamma}}_{k+1}(0)$ enforces continuity of the limit cycle. At a limit cycle boundary crossing \revonealso{$\Sigma_{k+1}$} between the $k$th and $(k+1)$st portions of the limit cycle, there exist two adjacent vector fields $\bm{F}_{k}$ and $\bm{F}_{k+1}$.  These vector fields evaluated at the limit cycle boundary crossing are denoted
\begin{equation}\label{eq:vector-field-gamma}
\begin{split}
\bm{F}_{k,t_k}&= \lim_{t \rightarrow t_k^-} \bm{F}_k(\revone{\bm{\gamma}}_{k}(t)),\\
\bm{F}_{k+1,0}&= \lim_{t \rightarrow 0^+} \bm{F}_{k+1}(\revone{\bm{\gamma}}_{k+1}(t)). 
\end{split}
\end{equation}
In Eq.~\eqref{eq:vector-field-gamma} and for the rest of this section, \ymp{unless stated otherwise,} the value $t$ will refer to the time elapsed within a particular region between boundary crossings, \textit{i.e.}~for the $k$th limit cycle segment, $t \in [0,t_k)$.  The one-sided limits exist because each vector field is required to be smooth on the closure of its domain.  The global iPRC, $\bm{z}$, will be defined in terms of the phase variable $\theta \in [0,1)$, but we will view the local iPRC $\bm{z}_k(t)$ in terms of local time. The independent variable of these iPRCs are related by
\begin{equation}\label{eq:global-to-local-phase}
\bm{z}(\theta) = \bm{z}_k(\theta T - T_{k-1}) = \bm{z}_k(t).
\end{equation}
We will use the local time $t$ in the proofs to follow, so that we only need to consider local dynamics at an arbitrary boundary crossing, without having to refer to the global dynamics. 

We define additional terms $\bm{z}_{k,t_k}$ and $\bm{z}_{k+1,0}$  by
\begin{equation}\label{eq:iprc-limits}
\begin{split}
\bm{z}_{k,t_k}&=\lim_{t\to t_k^-}\bm{z}_k(t),\\
\bm{z}_{k+1,0}&=\lim_{t\to 0^+}\bm{z}_{k+1}(t)
\end{split}
\end{equation}
where $\bm{z}_{k}$ and $\bm{z}_{k+1}$ are the solutions to the adjoint equation (Eq.~\eqref{eq:adjoint-piecewise}) over vector fields $\bm{F}_{k}$ and $\bm{F}_{k+1}$, respectively.  As a rule, the first entry of the subscript for either $\bm{z}_{k,t}$ or $\bm{F}_{k,t}$ denotes the limit cycle section, and the second entry of the subscript (when explicit) denotes the local time.

\yp{
\begin{theorem}\label{theorem}
Consider a particular domain boundary $\Sigma$ and a piecewise smooth limit cycle $\gamma(t)$ satisfying hypotheses H1--H4 and assumptions \ref{a:limit_cycle}--\ref{a:direct_deriv} that transversely crosses $\Sigma$ at point $\mathbf{p}$ at time $t=0$, exiting the old domain with velocity $\bm{F}^-$ and entering the new domain with velocity $\bm{F}^+$.  For brevity, write $\mathbf{z}^-$ for the iPRC vector $\lim_{t\to 0^-}\mathbf{z}(t)$ just before the crossing, and $\mathbf{z}^+$ for the iPRC vector $\lim_{t\to 0^+}\mathbf{z}(t)$ immediately after the crossing.  In the interior of domain $k$, the iPRC vector evolves according to $\dot{\mathbf{z}}=-(D\mathbf{F}_k(\gamma(t)))^\intercal\mathbf{z}.$  The boundary crossing induces a linear jump condition $A\mathbf{z}^+=B\mathbf{z}^-$.
If $\bm{w}_1,\ldots,\bm{w}_{n-1}$ is any orthonormal basis for the tangent space of $\Sigma$ at $\bm{p}$, the matrices $A$ and $B$ are given 
\begin{equation}\label{eq:saltation}
\yp{C}=\left(\begin{matrix}\bm{F}^+&|& \bm{w}_1 &|& \cdots &|& \bm{w}_{n-1}\end{matrix}\right)^\intercal,\quad
\yp{D}=\left(\begin{matrix}\bm{F}^-&|& \bm{w}_1 &|& \cdots &|& \bm{w}_{n-1}\end{matrix}\right)^\intercal.
\end{equation}
\end{theorem}
For convenience, we write 
\begin{equation}\label{eq:matrix-a-b}
M_{k+1} = \yp{C}_{k+1}^{-1} \yp{D}_{k},
\end{equation}
and call $M_{k+1}$ the ``jump matrix'' for the boundary crossing from region $k$ to $k+1$. Existence of the required matrix inverse is guaranteed by the transverse flow condition.

\textbf{Proof outline}: Requiring that the directional derivatives match at the boundary results in $n-1$ equations with $n$ unknowns. The final equation is determined by matching the normalization condition $F\cdot z = 1/T$ on both sides of the boundary. The resulting system is linear, and can be expressed in the matrix form above. See \S \ref{section:proof-of-theorem} for the proof of Theorem \ref{theorem}.
}

The following two corollaries specialize to the case of piecewise linear vector fields.  In this case the vector field is not only piecewise smooth, but the iPRC may be obtained in terms of matrix exponentials.

\begin{corollary}\label{corl:circuit}
With the assumptions of Theorem \ref{theorem} and affine linear vector fields $\bm{F}_k$, the initial condition of the iPRC, $\revone{\bm{z}}(\theta)$, must satisfy
\begin{equation}\label{eq:eigenvalue}
\bm{z}_{1,0}=B \bm{z}_{1,0},
\end{equation}
where
\begin{equation}
B=M_{1}e^{A_{K}t_{K}}M_K\cdots M_{k+1}e^{A_{k}t_{k}}M_k \cdots M_{3}e^{A_{2} t_{2}}M_{2}e^{A_{1} t_{1}},
\end{equation}
$t_k$ is the time of flight of the $k$th portion of the limit cycle, $e^{A_{k}t_k}$ is the matrix exponential solution of the adjoint equation with  $A_k = -\left(DF_k\right)^\intercal$ at time $t_k$, and $DF_k$ denotes the Jacobian matrix of the vector field $\bm{F}_k$.  Eq.~\eqref{eq:eigenvalue} and the normalization condition,
\revonealso{\begin{equation}\label{eq:normalization}
\bm{F}_{1,0}\cdot \bm{z}_{1,0}=\frac{1}{T},
\end{equation}}
yield a unique solution for the initial condition, $\bm{z}_{1,0} \in \mathbb{R}^n$. 
\end{corollary}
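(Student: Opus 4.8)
The plan is to reduce the corollary to a statement about a single linear map $B$ and its eigenstructure. Because each $\bm{F}_k$ is affine, the adjoint operator $A_k=-(DF_k)^\intercal$ is constant on the $k$th segment, so the local adjoint equation \eqref{eq:adjoint-piecewise} integrates to $\bm{z}_k(t)=e^{A_k t}\bm{z}_{k,0}$, and in particular $\bm{z}_{k,t_k}=e^{A_k t_k}\bm{z}_{k,0}$. Feeding this into the matching condition $\bm{z}_{k+1,0}=M_{k+1}\bm{z}_{k,t_k}$ of Theorem~\ref{theorem} gives the one-step recursion $\bm{z}_{k+1,0}=M_{k+1}e^{A_k t_k}\bm{z}_{k,0}$. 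Composing this recursion for $k=1,\dots,K$ and closing the loop with the requirement that the iPRC be $T$-periodic (so that the value on $\Sigma_1$ obtained after the final crossing, governed by $M_1\equiv M_{K+1}$, must coincide with $\bm{z}_{1,0}$) yields $\bm{z}_{1,0}=B\,\bm{z}_{1,0}$ with $B$ exactly as in the statement. Thus \eqref{eq:eigenvalue} says precisely that $\bm{z}_{1,0}$ is a $1$-eigenvector of $B$, and the work is to show that this eigenvector, together with the normalization \eqref{eq:normalization}, is unique.

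Existence is immediate from the hypotheses: assumption~\ref{a:phase} supplies a continuous phase function whose gradient is the iPRC, so the restriction $\bm{z}_{1,0}=\nabla\theta(\bdypt_1)$ is a genuine solution of \eqref{eq:eigenvalue}. Differentiating the phase relation $\tfrac{d}{dt}\theta(\revone{\bm{\gamma}}(t))=1/T$ by the chain rule gives $\nabla\theta\cdot\bm{F}=1/T$, so \eqref{eq:normalization} holds automatically for the true iPRC. The substance of the corollary is therefore \emph{uniqueness}, which I would obtain by showing that $1$ is a simple eigenvalue of $B$.

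For uniqueness, the key step is to identify $B$ with the forward variational problem. Within each linear region the tangent field $\bm{F}_k(\revone{\bm{\gamma}}_k(t))$ itself solves the variational equation, so it is carried by $e^{DF_k t_k}$, and across each boundary the perturbation is carried by the saltation matrix $S_{k+1}$, which is built precisely so that $S_{k+1}\bm{F}_{k,t_k}=\bm{F}_{k+1,0}$. Hence the forward monodromy matrix $\Phi=S_1 e^{DF_K t_K}S_K\cdots S_2 e^{DF_1 t_1}$ fixes the tangent vector, $\Phi\bm{F}_{1,0}=\bm{F}_{1,0}$. Using $e^{A_k t_k}=(e^{DF_k t_k})^{-\intercal}$ together with the relation $M_{k+1}=(S_{k+1}^\intercal)^{-1}$ developed in \S4.1, and the fact that the inverse-transpose is order-preserving under products, one gets $B=\Phi^{-\intercal}$. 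Consequently the spectrum of $B$ consists of the reciprocals of the Floquet multipliers of $\Phi$, with identical Jordan structure at eigenvalue $1$. Stability (hyperbolicity) of the limit cycle forces every nontrivial multiplier of $\Phi$ to lie strictly inside the unit circle, so the trivial multiplier $1$ is algebraically simple for $\Phi$, and hence the $1$-eigenspace of $B$ is one-dimensional.

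Finally, any nonzero $1$-eigenvector of $B$ is a scalar multiple of the true iPRC value $\nabla\theta(\bdypt_1)$, for which $\bm{F}_{1,0}\cdot\nabla\theta(\bdypt_1)=1/T\neq 0$; the normalization \eqref{eq:normalization} therefore selects the scalar uniquely, yielding a single $\bm{z}_{1,0}\in\mathbb{R}^n$. The main obstacle is the simplicity claim of the previous paragraph: it is exactly here that one must invoke the stability/hyperbolicity of the cycle and the saltation identification $B=\Phi^{-\intercal}$, and some care is needed to exclude a nontrivial Jordan block at the eigenvalue $1$ (i.e.~to pass from geometric to algebraic simplicity), which hyperbolicity of a stable periodic orbit guarantees.
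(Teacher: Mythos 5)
Your proof is correct, and its skeleton coincides with the paper's: the recursion $\bm{z}_{k+1,0}=M_{k+1}e^{A_k t_k}\bm{z}_{k,0}$ obtained by solving the constant-coefficient adjoint equation segment by segment, closure of the loop by periodicity to get \eqref{eq:eigenvalue}, and the scaling of the resulting eigenvector by the normalization \eqref{eq:normalization} are exactly the paper's steps. Where you genuinely diverge is the uniqueness of the eigendirection. The paper disposes of this in one sentence: if $B\mathbf{q}=\mathbf{q}$ for some $\mathbf{q}\notin\mathrm{span}(\hat{\bm{z}}_{1,0})$, then ``an arbitrarily small initial condition could be found near $\bdypt_{1}$ that did not converge to $\bm{\gamma}$'' --- an appeal to stability whose mechanism is left implicit. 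Your argument supplies that mechanism: the identification $B=\Phi^{-\intercal}$ via $M_{k+1}=(S_{k+1}^\intercal)^{-1}$ and $e^{A_k t_k}=(e^{DF_k t_k})^{-\intercal}$ (the single-crossing version of which is precisely the content of \S 4.1 of the paper) converts an extra unit eigenvalue of $B$ into an extra unit Floquet multiplier of the forward monodromy $\Phi$, contradicting asymptotic stability; and the identity $S_{k+1}\bm{F}_{k,t_k}=\bm{F}_{k+1,0}$, hence $\Phi\bm{F}_{1,0}=\bm{F}_{1,0}$, additionally \emph{explains} why $1$ is in the spectrum of $B$ rather than assuming it. What your route buys is rigor and a structural link to the variational problem; what the paper's route buys is brevity. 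Two small remarks. First, the Jordan-block concern you raise at the end is not actually needed for the corollary as stated: a nontrivial Jordan block at eigenvalue $1$ still has a one-dimensional eigenspace, so geometric simplicity already pins down the direction (hyperbolicity excludes the block anyway). Second, both you and the paper need ``stable'' to mean exponentially (hyperbolically) stable in order to place the nontrivial multipliers strictly inside the unit circle; you flag this explicitly, the paper does not.
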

\yp{\textbf{Proof outline}: By iterating the adjoint solution and jump matrices forward in time, we must return to the same initial condition, which results in Equation \ref{eq:eigenvalue}. This equation is an eigenvalue problem, and reveals that the initial condition of the iPRC is the unit eigenvector of the matrix $B$ up to scalar multiplication. The scalar multiple is uniquely determined by the normalization condition $F\cdot z = 1/T$.  See \S \ref{section:proof-of-corollaryIII2} for the proof of Corollary \ref{corl:circuit}}

% \bl{\begin{remark}
%  The Jacobian matrices $A_{k}$ in Corollary \ref{corl:circuit} are distinct from the matrices $\yp{C}_{k}$ appearing in the jump condition in Theorem \ref{theorem}.
% \end{remark}}

\begin{corollary}\label{corl:eqn}
\ymp{Let $t$ denote global time.} Under the assumptions of Corollary \ref{corl:circuit}, the iPRC is given by
\begin{equation}
 \bm{z}(t) = \begin{cases} 
      e^{A_1t}\bm{z}_{1,0} \equiv e^{A_1 t} B\bm{z}_{1,0} & 0 \leq t < T_1 \\
      e^{A_2\ymp{(t-T_{1})}}B_1\bm{z}_{1,0} & T_1 \leq t < T_2 \\
      \vdots\\
      e^{A_K\ymp{(t-T_{K-1})}}B_{K-1}\bm{z}_{1,0} & T_{K-1} \leq t < T_K
   \end{cases}
\end{equation}
where
\begin{equation}\label{eq:iprc_exact}
\begin{split}
 B_1 &= M_{2} e^{A_1 t_1}\\
 B_2 &= M_{3} e^{A_2 t_2} M_{2} e^{A_1 t_1}\\
 \vdots\\
 B_{K-1} &= M_{K} e^{A_{K-1} t_{K-1}} \cdots M_{3} e^{A_2 t_2} M_{2} e^{A_1 t_1},
\end{split}
\end{equation}
and 
\ymp{\begin{equation}
 T_k = \sum_{i=1}^k t_i, \quad k=1,\ldots,K,
\end{equation}
where $t_i$ denotes the time of flight of the $i^{th}$ portion of the limit cycle.}
\end{corollary}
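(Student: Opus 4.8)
The plan is to reduce the statement to three ingredients already in hand: the closed-form solution of the adjoint equation \eqref{eq:adjoint-piecewise} inside each linear region, the boundary matching condition of Theorem \ref{theorem}, and the local-to-global time bookkeeping of \eqref{eq:global-to-local-phase}. Because the vector fields $\bm{F}_k$ are affine linear, each Jacobian $DF_k$ is constant, so $A_k = -(DF_k)^\intercal$ is a constant matrix and the adjoint equation on the $k$th segment is a linear constant-coefficient system. First I would record that its solution is $\bm{z}_k(t) = e^{A_k t}\bm{z}_{k,0}$ for $t \in [0, t_k)$, where $\bm{z}_{k,0} = \lim_{t\to 0^+}\bm{z}_k(t)$ is the initial value of the iPRC on that segment; in particular $\bm{z}_{k,t_k} = e^{A_k t_k}\bm{z}_{k,0}$.

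Next I would propagate the iPRC across the boundary crossings. Theorem \ref{theorem} gives $\bm{z}_{k+1,0} = M_{k+1}\bm{z}_{k,t_k}$, so combining with the within-region evolution yields the one-step recursion
\begin{equation*}
\bm{z}_{k+1,0} = M_{k+1}\,e^{A_k t_k}\,\bm{z}_{k,0}.
\end{equation*}
Iterating this relation from the first segment, with $\bm{z}_{2,0} = M_2 e^{A_1 t_1}\bm{z}_{1,0} = B_1\bm{z}_{1,0}$, a short induction shows $\bm{z}_{k,0} = B_{k-1}\bm{z}_{1,0}$ for $k = 2,\ldots,K$, with $B_{k-1}$ exactly the ordered product \eqref{eq:iprc_exact}. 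Substituting into $\bm{z}_k(t) = e^{A_k t}\bm{z}_{k,0}$ and converting to global time via \eqref{eq:global-to-local-phase} --- i.e., replacing local time $t$ by $t - T_{k-1}$ on the interval $T_{k-1}\le t < T_k$ --- gives $\bm{z}(t) = e^{A_k(t-T_{k-1})}B_{k-1}\bm{z}_{1,0}$, which is the stated piecewise formula for $k\ge 2$.

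Finally I would treat the first segment and confirm consistency. On $0\le t < T_1$ the same computation gives $\bm{z}(t) = e^{A_1 t}\bm{z}_{1,0}$; the alternate expression $e^{A_1 t}B\bm{z}_{1,0}$ in the corollary follows because $\bm{z}_{1,0}$ is chosen (via Corollary \ref{corl:circuit}) to be the fixed vector satisfying $\bm{z}_{1,0} = B\bm{z}_{1,0}$, so the two forms coincide. I expect the only genuine subtlety --- rather than the routine algebra --- to be verifying that the piecewise construction is globally well-defined and periodic: the segments must close up after one full period, which is precisely what the eigenvalue and normalization conditions \eqref{eq:eigenvalue}--\eqref{eq:normalization} of Corollary \ref{corl:circuit} guarantee. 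With that fixed-point condition in place, the representation is consistent across $t = 0 \equiv T_K$, and the corollary follows.
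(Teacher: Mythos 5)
Your proposal is correct and follows essentially the same route as the paper's own proof: solve the constant-coefficient adjoint equation by matrix exponential within each region, propagate across boundaries with the jump matrices $M_{k+1}$ from Theorem \ref{theorem}, iterate (the paper does this step-by-step where you phrase it as an induction), convert to global time, and invoke Corollary \ref{corl:circuit} for the initial condition and the identity $B\bm{z}_{1,0}=\bm{z}_{1,0}$. No gaps.
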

\yp{\textbf{Proof outline}: The iPRC solution is a result of iterating the adjoint equation solution and jump matrices forward in time. See \S \ref{section:proof-of-corollaryIII3} for the proof of Corollary \ref{corl:eqn}.}

\bl{\begin{remark}
For examples of the matrices $M_{k+1}$ see Eqs.~\eqref{eq:jump_glass} (Glass network), \cite{park2013infinitesimal} (Iris system), and \eqref{eq:shc-m1}, \eqref{eq:shc-m2}, \eqref{eq:shc-m3} (3D piecewise linear central pattern generator).
\end{remark}}

Shaw \textit{et al.}~2012 considered a piecewise linear system satisfying \yp{Assumptions \ref{a:limit_cycle}--\ref{a:direct_deriv}}, where the vector field is piecewise differentiable but discontinuous at subdomain boundaries. Our theory correctly captures all discontinuities of the iPRC at the boundaries \cite{ShawParkChielThomas2012SIADS}. Coombes (2008) considered systems with continuous vector fields not necessarily differentiable at domain boundaries, and analytically computed continuous iPRCs for each system. In the following corollary, we show that a continuous iPRC is a general property of limit cycles over continuous, piecewise smooth vector fields.

\begin{corollary}\label{corl:c0-vector-fields}
  Under the assumptions of Corollary \ref{corl:circuit}, if adjacent vector fields evaluated along the limit cycle, $\bm{F}_{k+1,0}$ and $\bm{F}_{k,t_k}$, are continuous at the boundary $\bdypt_{k+1}$, then the matrix $M_{k+1}$ is the identity matrix.
\end{corollary}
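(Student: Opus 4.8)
The plan is to read off the conclusion directly from the explicit factorization $M_{k+1}=\mathcal{A}_{k+1}^{-1}\mathcal{B}_{k}$ supplied by Theorem \ref{theorem}, with essentially no further computation. The key observation is structural: in the displayed forms \eqref{eq:matrix-a-b}, the matrices $\mathcal{A}_{k+1}$ and $\mathcal{B}_{k}$ are assembled from exactly the same ingredients apart from their top rows. The bottom $n-1$ rows of both matrices are the components $v_{i,j}$ of the tangent vectors $\hat{\bm{w}}^{k+1}_i$, and these are attached to the single crossing point $\bdypt_{k+1}$; they are defined using the same orthonormal basis at the same location and so do not depend on which side of the boundary one approaches from. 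Only the first row differs: it records $\bm{F}_{k+1,0}=(a_1,\ldots,a_n)$ in $\mathcal{A}_{k+1}$ and $\bm{F}_{k,t_k}=(b_1,\ldots,b_n)$ in $\mathcal{B}_{k}$.

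First I would invoke the continuity hypothesis. By assumption the two one-sided limits of the vector field along the limit cycle agree at the crossing, $\bm{F}_{k+1,0}=\bm{F}_{k,t_k}$, which is precisely the statement $a_j=b_j$ for every $j=1,\ldots,n$. Combined with the structural observation above, this forces the top rows to coincide as well, so that $\mathcal{A}_{k+1}$ and $\mathcal{B}_{k}$ agree in every row and hence $\mathcal{A}_{k+1}=\mathcal{B}_{k}$ as $n\times n$ matrices. The transverse flow condition (noted immediately after Theorem \ref{theorem}) guarantees that $\mathcal{A}_{k+1}$ is invertible, so the conclusion follows at once:
\[
M_{k+1}=\mathcal{A}_{k+1}^{-1}\mathcal{B}_{k}=\mathcal{A}_{k+1}^{-1}\mathcal{A}_{k+1}=I.
\]

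I do not expect any genuine obstacle here, since the corollary is a bookkeeping consequence of the explicit matrix formula rather than a new analytic argument. The only points warranting care are interpretive: confirming that ``continuity of the adjacent vector fields at the boundary'' translates exactly into the equality $a_j=b_j$ of the first rows, and confirming that the tangent-vector rows are literally identical in the two matrices rather than merely spanning the common tangent hyperplane. Both hold by construction, because $\mathcal{A}_{k+1}$ and $\mathcal{B}_{k}$ are built from the same basis $\hat{\bm{w}}^{k+1}_i$ evaluated at the common crossing point $\bdypt_{k+1}$; once this is observed, the identity $M_{k+1}=I$ is immediate.
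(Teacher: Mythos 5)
Your proof is correct and follows essentially the same route as the paper's: continuity gives $a_j=b_j$ for all $j$, so $\mathcal{A}_{k+1}=\mathcal{B}_{k}$ and $M_{k+1}=\mathcal{A}_{k+1}^{-1}\mathcal{B}_{k}=I$. The extra care you take in noting that the tangent-vector rows are literally identical is implicit in the paper's argument but adds nothing beyond what the paper already does.
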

\yp{\textbf{Proof outline}: If the vector field is continuous, then the solution velocity before the crossing ($F^-$) is equal to the velocity after the crossing ($F^+$). Since all other terms in Equation \eqref{eq:saltation} are the same, and the jump matrix is the product of a matrix times its inverse, the jump matrix reduces to the identity. See \S \ref{section:proof-of-corollaryIII4} for the proof of Corollary \ref{corl:c0-vector-fields}.}

\begin{remark}
Our analysis therefore includes the iPRC calculations of \cite{Coombes:2008:SIADS} and \cite{ShawParkChielThomas2012SIADS}  as special cases.
\end{remark}

%  \yp{a planar piecewise constant system where the nonlinearities arise strictly from the boundaries in \S 3.1, a planar piecewise linear oscillator introduced in a motor control context \cite{ShawParkChielThomas2012SIADS}, but generalized here to a non-symmetric geometry in \S 3.2, a piecewise linear genetic regulatory circuit model (Glass network \cite{GlassPasternack1978JMB,GlassPerez1974JChemPhys}) in \S 3.3, a three-dimensional motor control model \cite{ShawLyttleGillCullinsMcManusLuThomasChiel2014JCNS} in \S 3.4, a four-dimensional weakly diffusively coupled version of the piecewise constant system in \S 3.4.1, and a six-dimensional threshold linear network model comprising of two weakly coupled three-dimensional oscillators \cite{MorrisonDegeratuItskovCurto2016arXiv} in \S 3.4.2.}  In \S 4.1 we discuss the relation between our boundary-crossing correction matrix and the classical saltation matrix, in \S 4.2 we discuss the limitations of the method, and  in \S 4.3 we discuss a range of possible further applications.  Following our conclusion  \S 5, the appendices detail the proofs and derivations of the  results.

\section{Results}\label{sec:examples}
\yp{We apply our analysis to several examples in order of increasing complexity. The first is a piecewise constant system with the vector fields arranged such that a limit cycle exists (\S 3.1). The second example (\S 3.2) is a planar system introduced in \cite{ShawParkChielThomas2012SIADS}, motivated by investigations of heteroclinic channels as a dynamical architecture for motor control. The third example (\S 3.3) 2D Glass network, a piecewise linear system obtained as the singular limit of a class of models for feedback inhibition and gene regulatory networks \cite{GlassPasternack1978JMB}. The fourth example (\S 3.4) is a 3D piecewise linear system arising as a simplification of a nominal central pattern generator model for regulation of feeding motor activity in the marine mollusk \textit{Aplysia californica} \cite{LyttleGillShawThomasChiel2017BiolCyb,ShawLyttleGillCullinsMcManusLuThomasChiel2015JCNS} and related to a Lotka-Volterra system with three populations \cite{NowotnyRabinovitch2007PRL}. In the final two examples, we extend our theory to the case of weakly coupled oscillators (\S 4.1). In the first weakly coupled example (\S 3.4.1), we take a pair of weakly coupled piecewise constant models from \S 3.1 (for a total of four dimensions) and analyze the existence, stability, and time-course of synchronous and anti-phase states. Finally, in \S 3.4.2, we consider a pair of weakly coupled piecewise linear models (a total of six dimensions), and show that the weak coupling theory accurately predicts the time-course and synchronization properties of the network.}

\subsection{\yp{Piecewise Constant Model}}\label{ssec:oct}
\yp{To highlight the necessity of our theory, we explore a simple planar example consisting solely of piecewise \emph{constant} vector fields. Through na\"{i}ve application of classic theory, one would expect that because the Jacobian matrix of each region is zero (due to the constant nature of each region), that the iPRC would also be constant. However, this is not the case as shown by numerical simulations (Figure \ref{fig:oct_prc}).}

\revone{Limit cycles -- closed, isolated periodic orbits -- do not occur in linear dynamical systems.  It is particularly striking, therefore, when they arise in a Filippov system, the right hand side of which is piecewise constant. In Figure \ref{fig:oct}, we show the domain of the system with example trajectories that converge  to the limit cycle (purple). \revonealso{On the right panel of the same figure, we define the vector field and corresponding domain}.}

\begin{figure}[h!]
\begin{minipage}[c]{0.5\textwidth}
\includegraphics[width=\linewidth,keepaspectratio=true]{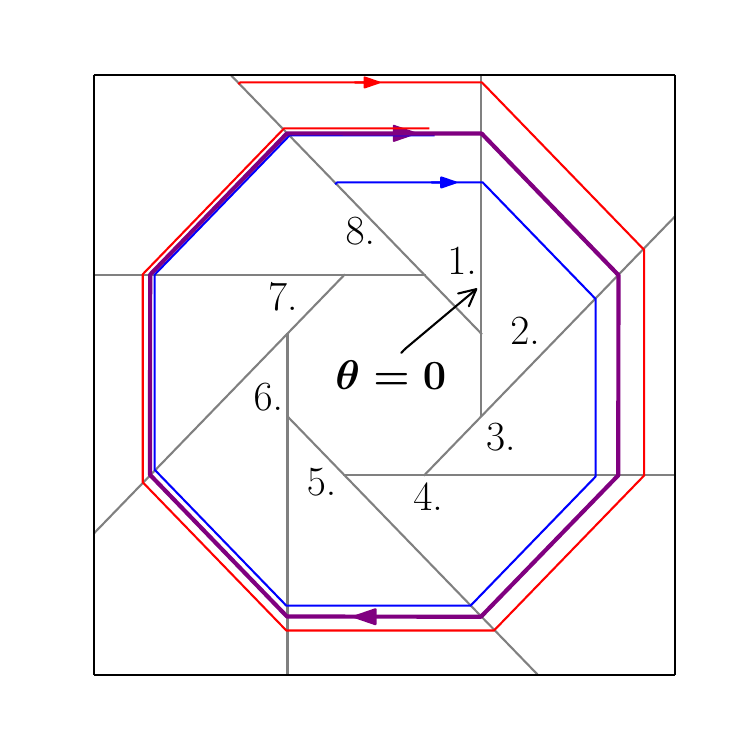}
\end{minipage}%%%%%%%%%%%%%%%%
\begin{minipage}[c]{0.5\textwidth}
\scriptsize
\begin{align*}
 f(\x) = 16\left\{
 \begin{array}{ll}
          \begin{pmatrix}
          1\\0
          \end{pmatrix}, &\x \in \{-y+\sqrt{2} \leq x < 1 \} \\
          \begin{pmatrix}
          1/\sqrt{2}\\-1/\sqrt{2}
          \end{pmatrix}, &\x \in \{1 \leq x < y + \sqrt{2}\} \\
          \begin{pmatrix}
          0\\-1
          \end{pmatrix}, &\x \in\{-1 \leq y < x - \sqrt{2}\}\\
          \begin{pmatrix}
          -1/\sqrt{2}\\-1/\sqrt{2}
          \end{pmatrix}, &\x \in\{-x-\sqrt{2} \leq y < -1\} \\
          \begin{pmatrix}
          -1\\0
          \end{pmatrix}, &\x \in\{-1\leq x < -y-\sqrt{2}\}\\
          \begin{pmatrix}
          -1/\sqrt{2}\\1/\sqrt{2}
          \end{pmatrix}, &\x \in\{ y-\sqrt{2}\leq x < -1\} \\
          \begin{pmatrix}
          0\\1
          \end{pmatrix}), &\x \in\{x+\sqrt{2}\leq y < 1\}\\
          \begin{pmatrix}
          1/\sqrt{2}\\1/\sqrt{2}
          \end{pmatrix}, &\x \in\{1\leq y < -x+\sqrt{2}\} \\
 \end{array}\right.
\end{align*}
\null
\par\xdef\tpd{\the\prevdepth}
\end{minipage}
\caption{\revone{The domain and solutions of the piecewise constant system. On the left panel, domains are labeled with numbers 1--8. We choose zero phase to be the vertical boundary between regions 1 and 2. The limit cycle solution is shown in purple, while example trajectories inside and outside the limit cycle are shown in blue and red, respectively. The right panel specifies the dynamics of each region with the corresponding domain. We define each set such that it implicitly contains all $(x,y)\in\mathbb{R}^2$ satisfying the inequalities. For all remaining  $\mathbf{x}$ (\textit{i.e.}, within the central octagonal region) we use $d\mathbf{x}/dt=\mathbf{x}$.}}
\label{fig:oct}
\end{figure}

Traditional application of the adjoint method suggests, incorrectly, that the solution to the adjoint equation is constant, because the Jacobian matrix in each region is zero. Standard numerical calculation of the iPRC (dots in Figure \ref{fig:oct_prc}) shows that the traditional approach is not sufficient. The numerics suggest that the iPRC is constant except on a set of measure zero, where it jumps discontinuously to other values. 

\yp{As a  check of our analytical results, we numerically evaluate the iPRC at a given phase as follows.} We integrate the limit cycle up to the given phase, then reinitialize the solution at a new position in the direction of a standard basis vector of magnitude between 1e-2 and 1e-4. This process effectively applies an infinitesimal delta function perturbation in phase space. After integrating for a sufficiently long time (typically 10 times the period), we record the timing difference between the unperturbed and perturbed limit cycles. We then divide this value by the magnitude of the perturbation. This magnitude is sufficiently small to give an accurate approximation to the true iPRC for the systems here. 

Using Theorem \ref{theorem}, we compute the size of the discontinuities exactly. The iPRC of both coordinates are shown in Figure \ref{fig:oct_prc}. Note that the Jacobian is identically zero within each domain, so the stability of the limit cycle arises from the contraction of adjacent trajectories (by a factor of $1/\sqrt{2}$) at each boundary crossing. 

\begin{figure}[h!]
\centering
\includegraphics[width=.75\linewidth]{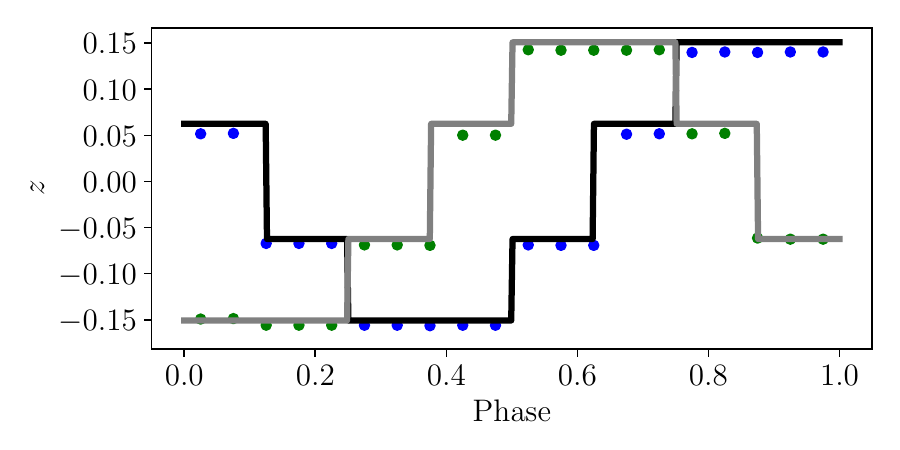}
 \caption{\revone{iPRC of the piecewise constant model. \yp{The numerically computed iPRC is shown in blue (green) for the first (second) coordinate}. The first (second) coordinate of the analytically computed iPRC function is shown in black (gray). The piecewise constant vector field leads to a piecewise constant iPRC. The components take the values $\pm\frac1{16}=\pm 0.0625$ and $\pm 1/(16(\sqrt{2}-1))\approx \pm 0.1509$.}}
 \label{fig:oct_prc}
\end{figure}

\yp{This example highlights the necessity of our theorem to compute the iPRC for oscillators in piecewise smooth systems. However, application of Theorem \ref{theorem} extends well beyond this simple, constructed example. In the examples to follow, we show the broad applicability of our theorem by analyzing several existing models in literature. We begin with a symmetric piecewise linear model from \cite{ShawParkChielThomas2012SIADS}.}

\subsection{\yp{Piecewise Linear Iris System With Non-Uniform Saddle Values}}
 %\todo{see aplysia /papers /yxp30 /j-math-neuro /iris\_modified.py}
In this section, we briefly discuss a planar model for which the iPRC is explicitly computable \cite{ShawParkChielThomas2012SIADS}, and how our theory extends this calculation.

As part of \cite{ShawParkChielThomas2012SIADS}, we analyzed a piecewise linear system (the ``iris system''), where a stable heteroclinic cycle (SHC) gives rise to a one-parameter family of limit cycles similar to those in the sine system. The piecewise linear analogue consists of four regions with velocity fields equivalent, under successive 90-degree rotations, to the velocity field in the unit square. Our theory reproduces the result immediately as shown in \cite{park2013infinitesimal}. Moreover, while the result in \cite{ShawParkChielThomas2012SIADS} depends on symmetry of the system, Theorem \ref{theorem} naturally accounts for the same system with heterogeneous vector fields,
\begin{equation}
\begin{split}
  \frac{d\hat{s}_k}{dt} &= -\lambda_k \hat{s}_k,\\
  \frac{d\hat{u}_k}{dt} &= \hat{u}_k,
\end{split}
\end{equation}
where $\lambda_k$ for $k \in \{1,2,3,4\}$. Despite the loss of symmetry, the calculations in this case follow the same steps as in \cite{park2013infinitesimal}. \yp{Next, we turn to a classic, biologically-motivated model with asymmetric piecewise linear vector field dynamics.}

\subsection{\yp{Two Dimensional Glass Network}}
Glass, Perez, and Pasternack introduced a planar piecewise linear system as a model of feedback inhibition in a genetic regulatory circuit  \cite{GlassPasternack1978JMB,GlassPerez1974JChemPhys}. Figure \ref{fig:glass_pasternack}(a) illustrates several trajectories converging to a stable limit cycle in such a network. The concentration $x_1$ stimulates the production of $x_2$, while $x_2$ inhibits the production of $x_1$. %\todo{see aplysia /papers /yxp30 /j-math-neuro /glass-pasternack-2d.py for figures of adjoint equation solutions} 
One may also consider a macroscopic analogue in a predator-prey system, where $x_1$ is the prey and $x_2$ is the predator. Within each quadrant of the plane, the trajectories converge towards a target point located in the subsequent quadrant (Figure \ref{fig:glass_pasternack}(a)).
\begin{figure}
\includegraphics[width=.8\linewidth]{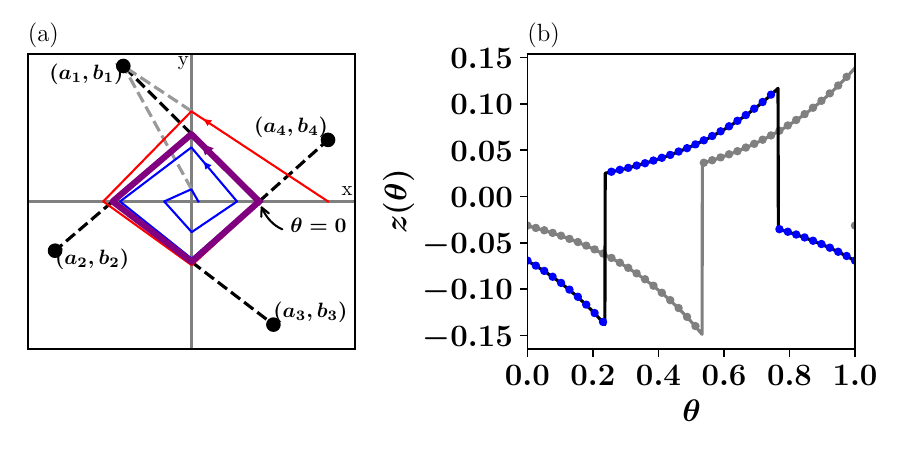}\,\\
\caption{\yp{\textbf{(a):} A model of feedback inhibition as discussed in Example 1 of Glass and Pasternack 1978. The limit cycle attractor (purple) traverses four quadrants, which serve as the four domains of the model.  We call the first quadrant region 1, and because the solutions travel counter-clockwise, the second quadrant is named region 2, the third quadrant region 3, and the fourth quadrant region 4.  Within each quadrant, trajectories are attracted to a target point outside the domain, as shown by the black and gray dashed lines.  For example, in region 1, the limit cycle trajectory (purple) is attracted to the target point $(a_1,b_1)$ until it hits the positive $y$-axis, at which point the limit cycle trajectory changes direction towards the next target point, $(a_2,b_2)$.  Two sample trajectories in the same region, one inside (blue) and one outside (red) the limit cycle, demonstrate that the purple loop is indeed a limit cycle attractor.  The target points are $(a_1,b_1) = (-5,11)$, $(a_2,b_2) = 
(-10,-4)$, $(a_3,b_3) = (6,-10)$, and $(a_4,b_4) = (10,5).$ \textbf{(b):} The numerical (dots) and analytical (lines) infinitesimal phase response curves of the planar Glass network model. The analytical solution to the adjoint equation is given by Eq.~\eqref{eq:iprc_exact}; the numerical iPRC is calculated via direct perturbation. Blue curve: iPRC for perturbations along the horizontal axis. Gray curve: iPRC for perturbations along the vertical axis.}}
\label{fig:glass_pasternack}
\end{figure}
%\todo{for figure, see aplysia /yxp30 /papers /j-math-neuro /glass\_2d\_fig.pdf}
%

The limit cycle attractor, its time of flight through each region, and its points of intersection with each axis is found by using Poincar\'e maps. We refer the reader to the references above for the details. By Theorem \ref{theorem}, the jump matrices are
\begin{equation}\label{eq:jump_glass}
 \begin{split}
  M_1 = \left ( \begin{matrix} 1 & 0 \\ \frac{ a_4-a_1}{b_1}& \frac{b_4}{b_1} \end{matrix} \right ), &\quad M_{2} = \left ( \begin{matrix}  \frac{ a_1}{a_2}& \frac{b_1-b_2}{a_2} \\ 1 & 0 \end{matrix} \right ),\\
  M_{3} = \left ( \begin{matrix} 1 & 0 \\ \frac{ a_2-a_3}{b_3}& \frac{b_2}{b_3} \end{matrix} \right ), &\quad M_{4} = \left ( \begin{matrix}  \frac{ a_3}{a_4}& \frac{b_3-b_4}{a_4} \\ 1 & 0 \end{matrix} \right ).
 \end{split}
\end{equation}

By Corollary \ref{corl:circuit}, we find that the eigenvector associated with the unit eigenvalue is
\begin{equation}
\bm{z}_{1,0} = \frac{\hat{\bm{z}}_{1,0}}{T \left[ b_1-\frac{a_1}{b_1}(a_1-\left[\bdypt_{1} \right]_1) \right ]}.
\end{equation}
We now have enough information to generate the iPRC using Equation \eqref{eq:iprc_exact}, which we show in Figure \ref{fig:glass_pasternack}(b).

% \begin{figure}[h!]
% \includegraphics[width=\linewidth]{glass_2d_prc_fig.pdf}
% \caption{The numerical (dots) and analytical (lines) infinitesimal phase response curves of the planar Glass network model. The analytical solution to the adjoint equation is given by Eq.~\eqref{eq:iPRC_for_Glass_network}; the numerical iPRC is calculated via direct perturbation. Blue curve: iPRC for perturbations along the horizontal axis. Gray curve: iPRC for perturbations along the vertical axis.}
% \label{fig:iPRC_compare_Glass}
% \end{figure}
 % \todo{for figure, see aplysia /papers /yxp30 /j-math-neuro /glass\_2d \_prc\_fig.pdf}

\revonealso{Figure \ref{fig:glass_pasternack}(b) shows the iPRC obtained analytically together with the iPRC obtained by direct numerical perturbations. The iPRC components show the sign and size of the effect that a small displacement away from the limit cycle (LC) trajectory has on the subsequent timing of the trajectory as it returns to the LC.  For example, the descending blue curve (horizontal component) reaches a minimum \yp{at the point of egress from the northeastern sector}, with a discontinuity occurring where the LC trajectory crosses the ray $x=0,y>0$ (\textit{cf.} Figure \ref{fig:glass_pasternack}(b)). The timing of the limit cycle has its greatest sensitivity to small perturbations in the $(1,0)$ direction at this point in the cycle; a perturbation in this direction causes a delay in the trajectory upon return to the LC. Similarly, the greatest sensitivity to perturbations in the horizontal direction occur immediately before crossing the ray $y=0,x<0$; a perturbation in the direction $(0,1)$ at this point causes a significant delay. In contrast, the greatest phase advance in response to a horizontal (resp., vertical) perturbation occurs just before the border crossing at approximately 3/4 of the period (resp., 1 full period).
}

\yp{So far our examples have been scalar or planar, but Theorem \ref{theorem} applies to systems in arbitrary dimensions. In our next example, we consider a novel piecewise linear system in three dimensions.}

\subsection{\yp{Nominal Biting Model of \textit{Aplysia Californica} }}

In this section, we consider a piecewise linear analogue of the three-dimensional system considered in \cite{ShawLyttleGillCullinsMcManusLuThomasChiel2015JCNS}. Written in the order of regions 1, 2, and 3, respectively, we consider the following system:
\begin{equation}\label{eq:piecewise-linear-heteroclinic-full}
\frac{d\bm{r}}{dt} = \left \{ \begin{array}{cl}
  \begin{matrix}1-x -(y+a) \rho \\ y + a \\ (z-a)(1-\rho)\end{matrix},& \quad x \geq y+a, x \geq z-a,\\
  \vspace{5pt}\\
  \begin{matrix}(x-a)(1-\rho) \\ 1-y-(z+a) \rho  \\ z + a\end{matrix},& \quad  y > x-a, y \geq z+a,\\
  \vspace{5pt}\\
  \begin{matrix}x+a\\ (y-a)(1-\rho)\\1 -z - (x+a)\rho \end{matrix},& \quad z > x+a, z > y-a, \\
\end{array}\right .
\end{equation}
where $\bm{r} = (x,y,z)$, $a\ge 0$ is the bifurcation parameter, and $\rho$ is the coupling strength.  The domains of Eq.~\eqref{eq:piecewise-linear-heteroclinic-full} lie in equal thirds of the unit cube, which, when $a=0$, all share an edge along the vector $(1,1,1)$.  For  $a=0$, the domain of region 1 is the convex hull of the vertices $(1,0,0)$, $(1,0,1)$, $(1,1,0)$, and $(1,1,1)$.  Similarly, the domain of region 2 is the convex hull of the vertices $(0,1,0)$, $(1,1,0)$, $(0,1,1)$, and $(1,1,1)$, and the domain of region 3 is defined by the vertices $(0,0,1)$, $(1,0,1)$, $(0,1,1)$, and $(1,1,1)$.  The saddle points of the system lie on a vertex of each domain, namely at $(1,0,0)$, $(0,1,0)$, and $(0,0,1)$, for regions 1, 2, and 3, respectively.

The calculation of the jump matrices is more involved than the previous examples, so we include additional details in Appendix \ref{a:aplysia_details} and move ahead to the calculation of the initial value of the iPRC.

\begin{figure*}
\includegraphics[width=\textwidth]{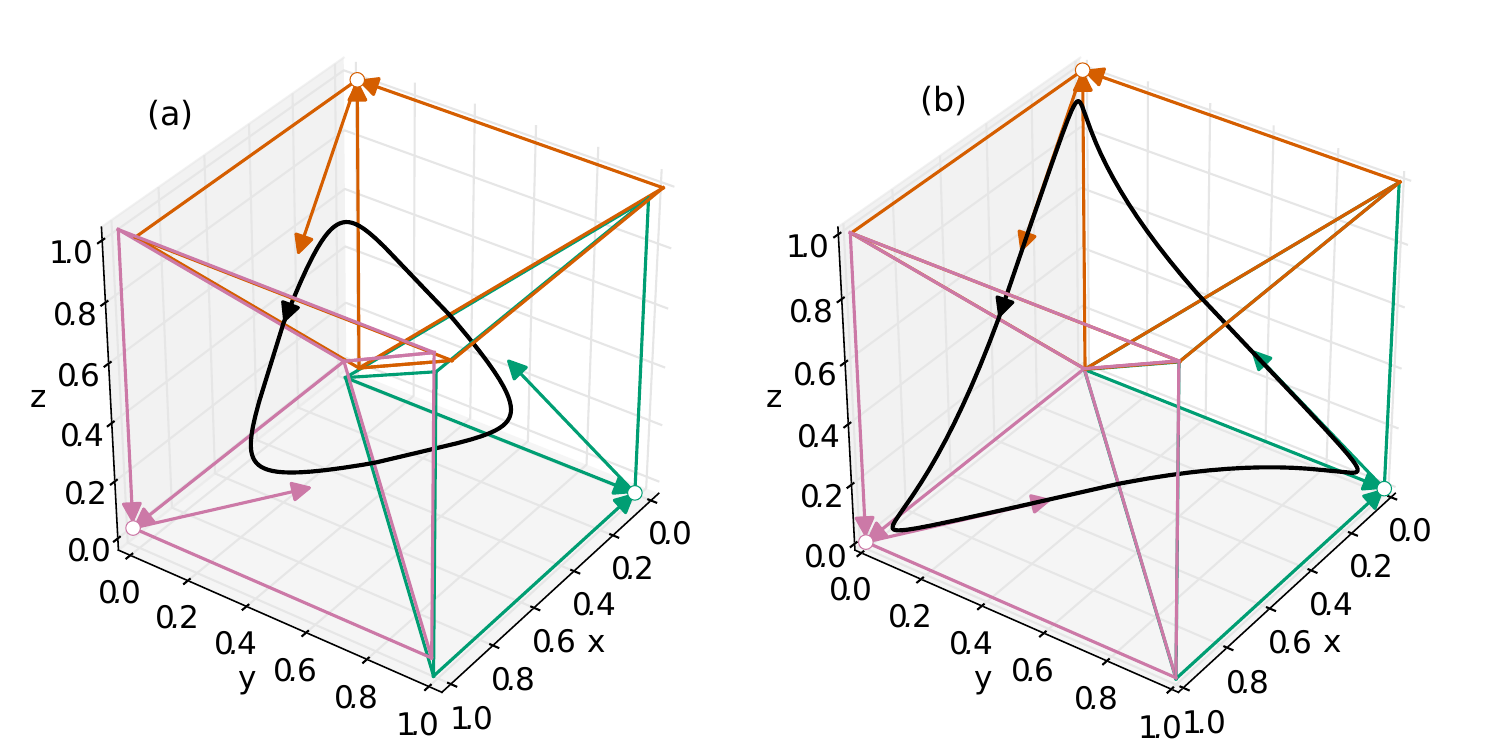}
 \caption{Piecewise linear model of \textit{Aplysia} motor system for two bifurcation parameter values,  \ymp{$\rho=3$, $a=0.02$ (a) and $a=0.001$} (b)  The three domain boundaries are defined according to Eq.~\eqref{eq:piecewise-linear-heteroclinic-full}, with $\| (x,y,z) \|_1 \leq 1$ when $a=0$.  Regions 1, 2, and 3 are colored magenta, green, and orange, respectively (color online).  Each region includes one saddle point, denoted by a circle.  The two arrows pointing into the saddle point 
  indicate the two eigenvectors generating the stable manifold, and the arrow pointing away from the saddle 
  indicates the  eigenvector generating the unstable manifold (Eqs.~\eqref{eq:biting_eigenvectors1}-\eqref{eq:biting_eigenvectors3}).
The black loop in both figures represents the stable limit cycle, with the black arrow denoting the direction of flow.   The tip of the black arrow marks the point $\bdypt_{1}$ at the boundary between regions 3 and 1.  We define the phase at this point to be zero.}
 \label{fig:nominal-biting}
\end{figure*}
 %\todo{for figure, see aplysia /papers /yxp30 /j-math-neuro /nominal\_biting\_fig\_combined.pdf}
%
We continue with $\rho=3$ and \ymp{$a =0.01$}, which is sufficient for a limit cycle solution to exist (Figure \ref{fig:nominal-biting}). By Corollary \ref{corl:circuit}, $B$ is given by
\begin{equation}
B \approx 10 \times \left ( \begin{matrix}
1.25\times 10^3 & 2.71 & -4.23 \times 10^2\\
 2.09 \times 10^4 & 3.18 \times 10  & -2.5 \times 10^3\\
1.52 \times 10^3 & 2.90  &  -3.85 \times 10^2
\end{matrix} \right),
\end{equation}
with a near-unit eigenvalue of approximately $0.998$.  The associated eigenvector, 
%This eigenvalue is not exactly $1$ because of numerical errors, but we assume the errors are sufficiently small for our analysis to hold.  The associated eigenvector, 
$\hat{\bm{z}}_{1,0}$ is,
\begin{equation}
\hat{\bm{z}}_{1,0} \approx \left ( 1.15 \times 10^{-3}, -1, -2.98\times10^{-3} \right ).
\end{equation}

As in the preceding examples, the initial condition of the iPRC, $\bm{z}_{1,0}$, comes from scaling this eigenvector of matrix $B$, $\hat{\bm{z}}_{1,0}$, by Eq.~\eqref{eq:normalization} of Corollary \ref{corl:circuit}:
\begin{equation}
 \bm{z}_{1,0} = \frac{\hat{\bm{z}}_{1,0}}{(\hat{\bm{z}}_{1,0}\cdot \bdypt_{1} )T}.
\end{equation}
The values $\bdypt_{1}$ and $T$ represent the initial condition of the limit cycle and the total period of the limit cycle, respectively, and are found numerically.
\begin{figure}[h!] 
\includegraphics[width=.75\linewidth]{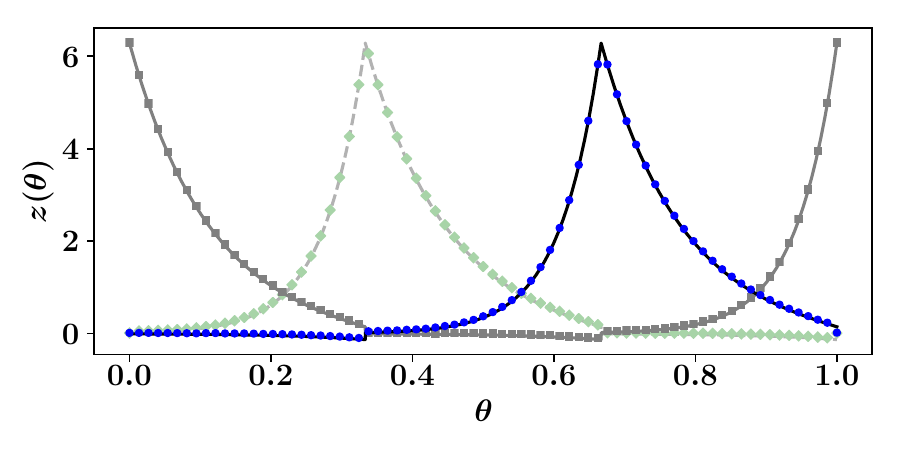}
 \caption{\textit{Aplysia} motor control model iPRC, for  parameters $\rho=3$ and  $a=0.01$.  The blue dots, gray squares, and light green diamonds represent the numerical iPRC, found by the direct method of perturbations, of the first, second, and third components of the iPRC, respectively. The solid black, solid gray, and dashed light gray lines represent the analytical iPRC derived using Theorem \ref{theorem} and Corollary \ref{corl:circuit} of the first, second, and third components of the iPRC, respectively. }
 \label{fig:nominal-biting-prc}
\end{figure}
 %\todo{see aplysia /papers /yxp30 /j-math-neuro /nominal\_biting \_prc\_fig.pdf}

% \begin{figure}[h!]
%  \centering
%  \includegraphics[width=\textwidth]{piecewise_linear_heteroclinic.pdf}
% \end{figure}

\revonealso{For the numerical iPRC, we choose the perturbation magnitude to be $10^{-4}$, with width equal to the time step and follow the same procedure used to generate Figure \ref{fig:glass_pasternack}(b). Figure~\ref{fig:nominal-biting-prc} plots the resulting analytic iPRC together with the iPRC obtained numerically by the direct perturbation method, showing good agreement.}
\revonealso{The greatest phase advance in response to a positive perturbation in the $(1,0,0)$ direction (black), $(0,1,0)$ direction (solid gray), and $(0,0,1)$ direction (light dashed gray lines) occurs at phase 2/3 (entry to the region including the point $(0,0,1)$), phase 0 (entry to the solid region including the point $(1,0,0)$), and phase 1/3 (entry to the region including point $(0,1,0)$), respectively.
}

\yp{Each of the preceding examples is consistent with our theory. We now turn to more practical applications. In the final pair of examples to follow, we demonstrate the utility of Theorem \ref{theorem} and iPRCs in general by analyzing weakly coupled piecewise smooth oscillators in 4- and 6-dimensions, respectively.}

% The results to follow are complementary to \cite{shirasaka2017phase}.

\subsection{\revone{Synchronization of Piecewise Smooth Oscillators}}
\label{ssec:sync}
\revone{In the classic theory for smooth systems, synchronization properties of weakly coupled identical oscillators of the form
\begin{align}
 \frac{d\x}{dt} &= f(\x) + \ve G(\x,\y)\label{eq:x}\\
 \frac{d\y}{dt} &= f(\y) + \ve G(\y,\x)\label{eq:y},
\end{align}
where $|\epsilon|\ll 1$,
may be understood by studying the so-called \emph{interaction function}, $H$, which is the convolution of the coupling function $G$ with the iPRC of the $T$-periodic limit cycle oscillator $U$ satisfying $\dot U = f(U)$. Thus \begin{equation}\label{eq:hfun}
 H(\phi) = \frac{1}{T}\int_0^T z(t)\cdot G[U(t),U(t+\phi)]dt,
\end{equation}
where $U$ is the limit cycle oscillator, and $z$ is the associated (vector) iPRC  \cite{ErmentroutTerman2010book}. Although the theory assumes that $U$ satisfies both Equations \eqref{eq:x} and \eqref{eq:y} for $\ve$ small, the identical oscillators may have different phases, $\theta_1$ and $\theta_2$, respectively. To study synchronization, we consider the phase difference $\psi = \theta_2 - \theta_1$, \textit{i.e.}~we must integrate the scalar differential equation
\begin{equation}\label{eq:phase_diff}
 \frac{d\psi}{dt} = \ve [H(-\psi) - H(\psi)].
\end{equation}
Stable fixed points of this equation predict the existence and type of stable phase-locked solutions under weak coupling.}
\pt{\begin{remark} 
\label{rem:constant_z}
If $z(t)$ were constant, then $H(\phi)$ would be constant, and $d\psi/dt$ would be identically zero.\end{remark} We will refer back to this elementary observation in \S \ref{sssec:piecewise_constant_LC}.}

\revone{As established above, piecewise smooth oscillators may have 
the infinitesimal phase response curves with discontinuities at
domain boundaries.  By obtaining the form of such discontinuities,
our theory allows us to extend weakly coupled oscillator analysis
to weakly coupled piecewise smooth oscillators.  We present two
examples to illustrate this application of the theory:
first, we consider coupled limit cycle oscillators in a recently introduced threshold linear network framework \cite{MorrisonDegeratuItskovCurto2016arXiv}, and show that our analysis correctly captures not only the correct stable phase-locked state, but also the time course of synchronization.  Even more strikingly, we demonstrate synchronization of coupled limit cycle oscillators in a \emph{piecewise constant} dynamical system.  For a piecewise constant system, the Jacobian matrix is identically zero in the interior of each domain. Hence all nonlinear effects -- including the existence of limit cycles and their synchronization properties -- may be said to arise entirely through the boundary-crossing behavior analyzed in this paper.}

For each example below, we show that our calculations correctly predict not only the type of synchronization, but also the time course of synchronization, demonstrating that we correctly capture the nonlinear dynamics of synchronization in these weakly coupled piecewise smooth limit cycle systems. We begin by coupling a pair of one of our first examples, the piecewise constant system (Figure \ref{fig:oct}).

\subsubsection{\revone{\yp{Synchronization of Weakly Coupling of Oscillators with Piecewise-Constant--Velocity}}}
\label{sssec:piecewise_constant_LC}

As seen in Section \ref{ssec:oct}, the limit cycle of the piecewise constant system arises as a result of the boundaries. Similarly, the iPRC is entirely determined by the boundary effects, and is piecewise constant in the domain interiors. Synchronization effects arise therefore due to the nonlinear effects at the boundaries and can be analyzed using Theorem \ref{theorem}.

\revone{We study the synchronization of two such systems under diffusive coupling of the form  $\dot \x = f(\x) + \ve G(\x,\y),\quad 
\dot \y = f(\y) + \ve G(\y,\x),$
%\begin{align*}
% \dot \x = f(\x) + \ve G(\x,\y),\\
% \dot \y = f(\y) + \ve G(\y,\x),
%\end{align*}
\revonealso{where $\x = (x_1,x_2)^T$, $\y=(y_1,y_2)^T$}, and
\begin{equation}
 G(\x,\y) = g\left(\begin{matrix}
             (y_1-x_1)\\
             (y_2-x_2)
            \end{matrix}\right).
\end{equation}
Using Corollary \ref{corl:circuit}, we find the discontinuities at the boundaries, and analytically derive the iPRC (Figure \ref{fig:oct_prc}). The matrix $B$ is
\begin{equation}
 B = \left( \begin{matrix}
      1 & 0\\
      15(1 + \sqrt{2}) & 16
     \end{matrix}\right),
\end{equation}
which has eigenvalues and associated eigenvectors
\begin{equation}
  \begin{array}{ll}
      \lambda_1 = 1 & \lambda_2=16\\
      \vec v_1 = \left( \begin{matrix}
      1-\sqrt{2} \\
      1
     \end{matrix}\right) &   \vec v_2 = \left( \begin{matrix}
      0 \\
      1
     \end{matrix}\right)
     \end{array}.
\end{equation}}

\revone{Following the calculations in \S \ref{ssec:sync}, we see that the convolution of a piecewise constant iPRC with a linear (diffusive) coupling function gives a $C^1$, piecewise quadratic interaction function (Equation \eqref{eq:hfun}, top right panel of Figure \ref{fig:hfuns}). The bottom right panel of Figure \ref{fig:hfuns} shows the right hand side of Equation \eqref{eq:phase_diff} for this example. Figure \ref{fig:oct_phase} compares the phase difference predicted by the theory (Equation \eqref{eq:phase_diff}) to simulations, showing excellent agreement. The first two panels show the state variables near the beginning and end of the simulation, respectively. The limit cycles (the first limit cycle is shown in shades of orange and the second in shades of dashed blue)  begin close to antiphase, then  converge to an in-phase solution. The rightmost panel shows the predicted and simulated phase difference of the oscillators over time. }
\pt{The horizontal dashed line shows the ``prediction" one would obtain if one neglected the effects due to crossing the switching manifolds, namely the absence of synchronization (i.e.~$d\psi/dt=0$, see  Remark \ref{rem:constant_z}).  Hence synchronization arises solely from the effects of the switching boundaries.}

\begin{figure}[h!]
\centering
\includegraphics[width=\linewidth]{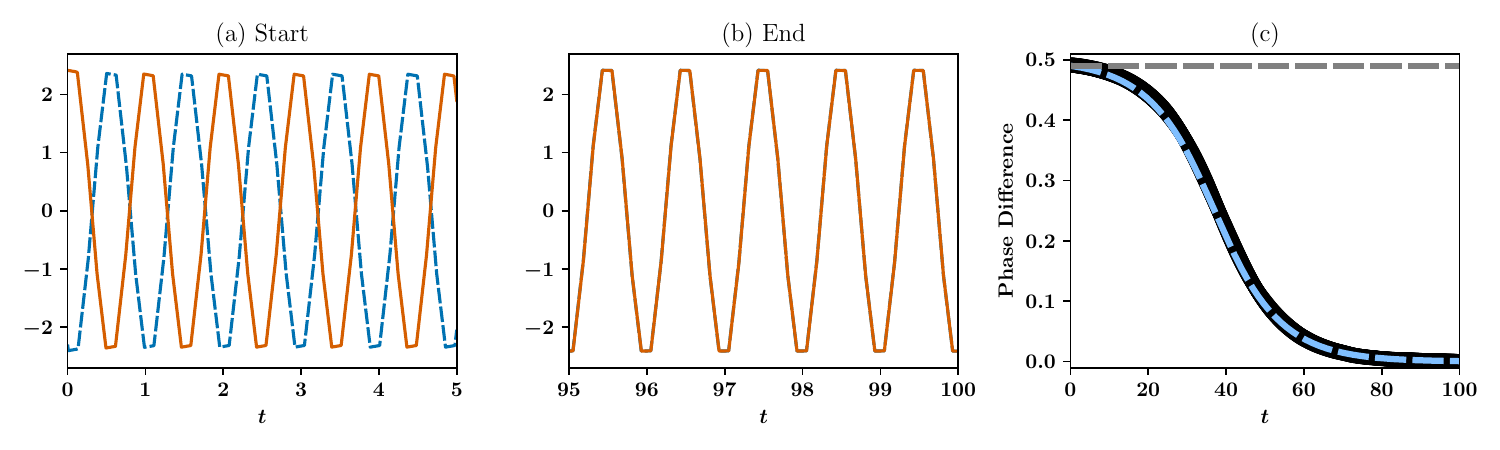}
 \caption{\revone{\yp{Phase difference of weakly coupled piecewise constant oscillators over time. The first state variable of the first oscillator ($x_1$) is orange. The first state variable of the second oscillator ($y_2$) is shown using a dashed blue line. (a) Solutions near the start of the simulation. The dashed blue line is nearly anti-phase to the orange line. (b) Over time, the solutions converge to a synchronous solution. (c) Our predicted phase difference in light blue dashed matches the numerically computed phase difference in black.}}}
 \label{fig:oct_phase}
\end{figure}
% 
% \revone{We conclude this example by establishing analytically the existence of the stable limit cycle suggested by the numerics.
% First, we isolate region 1 into local coordinates,
% \begin{align*}
%  \dot u &= 16,\\ 
%  \dot v &= 0,
% \end{align*}
% which exist within the region bounded by the lines $u=-v$ and $u=0$. These local coordinates and global coordinates are related by the transformation $x = u + 1$ and $y = v + 1/(1+\sqrt{2})$. In local coordinates, an initial condition $(u_0,v_0)$ (where $-v_0 = u_0$ and $u_0 < 0$) on the leftmost edge hits the right edge at location $(0,v_0)$, then resets according to the transformation
% \begin{equation}
%  (0,v_0) \mapsto R\left(0,v_0+\frac{1}{1+\sqrt{2}}\right),
% \end{equation}
% where $R$ is the matrix for the counter-clockwise rotation of a vector by $\pi/4$ radians. The map from the rightmost edge before reset and back to the same edge before reset is
% \begin{equation}
%  f(v_0) = \frac{-2 + \sqrt{2} + v_0}{\sqrt{2}}.
% \end{equation}
% By solving the fixed point equation $f(v_0) = v_0$, we find that $v_0 = \frac{-2\sqrt{2} + 2}{\sqrt{2}-1}$. To determine stability, we take the derivative of $g(v_0) = f(v_0) - v_0$. 
% \begin{equation*}
%  g'(v_0) = \frac{1}{\sqrt{2}} - 1 < 0,
% \end{equation*}
% thus the fixed point is linearly stable, and so the limit cycle is stable as well.}

\begin{figure}
 \includegraphics[width=\textwidth]{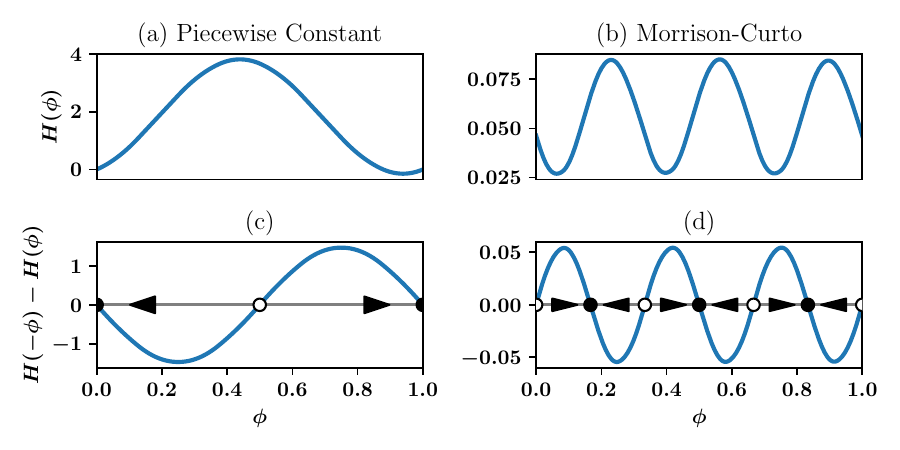}
 \caption{The $H$ functions of the weakly coupled examples. \textbf{(a)}: The $H$ function of the piecewise constant system with reciprocal diffusive coupling. \textbf{(c)}: The right-hand side of the phase dynamics of the piecewise constant system. All non-fixed point solutions tend towards synchrony. \textbf{(b)}: The $H$ function of the Morrison-Curto model. \textbf{(d)}: The right-hand side of the phase dynamics of the Morrison-Curto model. The phase dynamics converge to different phase-locked values depending on the initial conditions. }\label{fig:hfuns}
\end{figure}

\subsubsection{\revone{Synchronization of Weakly Coupled Oscillators with Piecewise-Linear--Velocity}}
\label{sssec:MorrisonCurto}
{In \cite{MorrisonDegeratuItskovCurto2016arXiv} Morrison, Curto and colleagues demonstrate that a simple class of competitive threshold-linear networks can exhibit a rich array of nonlinear dynamical behaviors, including stable  limit cycles, quasi-periodic trajectories, and chaotic dynamics, as well as coexistence of multiple attractor types within networks of modest dimension.  In this section, we study synchronization properties of two weakly coupled limit cycle oscillators within the Morrison-Curto network:
\begin{equation}\label{eq:morrison_curto_full}
\begin{split}
 \frac{dx_i}{dt} &=-x_i+\left [\sum_{j=1}^3 W_{ij} x_j + \alpha(-1-\delta)\sum_{j=1}^3  y_j + \theta\right]_+,\\
 \frac{dy_i}{dt} &=-y_i+\left [\alpha(-1-\delta)\sum_{j=1}^3  \yp{x}_j + \sum_{j=1}^3 W_{ij} x\yp{y}_j + \theta\right]_+,
\end{split}
\end{equation}

with threshold parameter $\theta=1$,
\begin{equation}
W=\left(\begin{matrix}
 0 & -1-\delta & -1+\varepsilon\\
 -1+\varepsilon & 0 & -1-\delta\\
 -1-\delta & -1+\varepsilon & 0
\end{matrix}\right),
\end{equation}
and $0\le\alpha< 1$. The threshold nonlinearity $[\cdot]_+$ is given by $[y]_+ = \max\{y,0\}$. With this choice of weight matrix, and for $\alpha=0$, the system exhibits a limit cycle $\x_0(t)$ \cite{MorrisonDegeratuItskovCurto2016arXiv}. For sufficiently small $\alpha>0$ the limit cycle persists, with negligible changes in its shape and timing properties as $\alpha$ increases.}

\begin{figure}[h!]
\centering
\includegraphics[height=.9\linewidth]{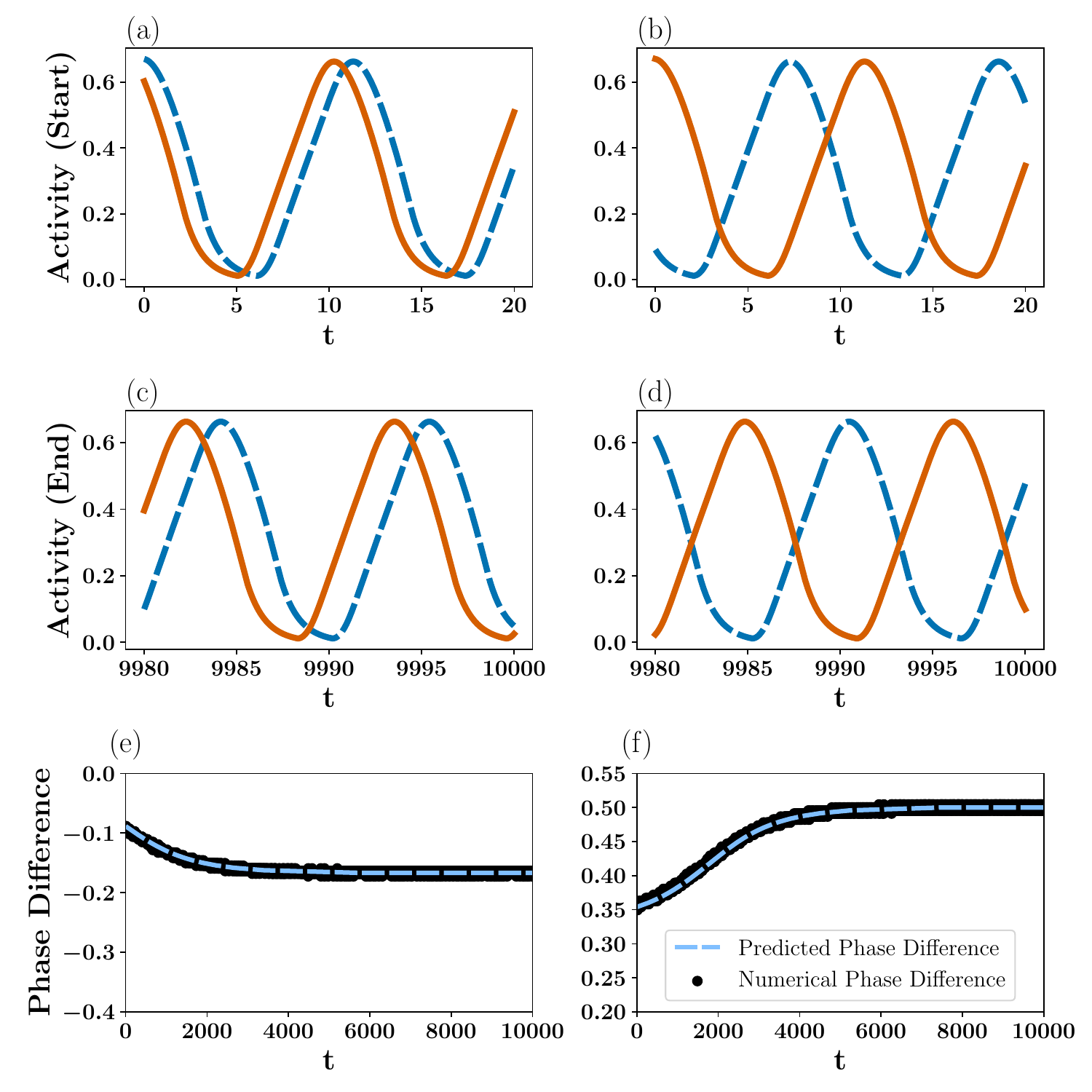}
 \caption{\revone{\yp{Evolution of phase difference for  weakly coupled Morrison-Curto competitive threshold-linear network oscillators. The first state variable of each oscillator are shown in orange and blue dashed (corresponding to $x_1$ and $y_1$, respectively).  \textbf{Left column} ((a),(c),(e)):  near-synchronous initial conditions (a) separate slightly to a state with greater phase lag (c) with a phase difference of $-1/6$ (e). \textbf{Right column} ((b), (d), (f)): near-antisynchronous initial conditions (b) converge to an antiphase solution (d), with a phase difference of $1/2$. \textbf{Bottom row} ((e), (f)): time course of phase difference predicted from reduced phase description, Equation \eqref{eq:phase_diff} (light blue dashed line), matches that of full 6D system (black solid line).  Parameters $\alpha=0.01, \delta=0.5, \varepsilon=0.25$. See also \S \ref{sssec:phase_estimation}.}}}
 \label{fig:threshold}
\end{figure}

{When $\alpha\ll1$, the coupling term $\alpha(-1-\delta)\sum_{j=1}^3 y_j$ has little effect on the boundary crossing points of the limit cycle, but it does contribute to the dynamics of $x_i$, provided the term $\sum_{j=1}^3 W_{ij} x_j$ is above threshold. Thus, to first order in $\alpha$, we may approximate the dynamics as
\begin{equation}
 \frac{dx_i}{dt} =-x_i+\left[\sum_{j=1}^3 W_{ij} x_j+\theta\right]_+  + \alpha(-1-\delta) \Theta\left(\sum_{j=1}^3 W_{ij} x_j+\theta\right)\sum_{j=1}^3 y_j,
\end{equation}
where $\Theta$ is the Heaviside function. In this form, we have a system of weakly coupled oscillators,  $d\x/dt = F(\x) + \alpha G(\x,\y),d\y/dt = F(\y) + \alpha G(\y,\x)$, 
%\begin{align*}
% \frac{d\x}{dt} &= F(\x) + \alpha G(\x,\y),\\
% \frac{d\y}{dt} &= F(\y) + \alpha G(\y,\x),
%\end{align*}
where
\begin{equation}
 F(\x) = \left(\begin{matrix}
          -x_1 + \left[\sum_{j=1}^3 W_{1j} x_j+\theta\right]_+\\
          -x_2 + \left[\sum_{j=1}^3 W_{2j} x_j+\theta\right]_+\\
          -x_3 + \left[\sum_{j=1}^3 W_{3j} x_j+\theta\right]_+
         \end{matrix}\right),
\end{equation}
and
\begin{equation}
 G(\x,\y) = \left(\begin{matrix}
          (-1-\delta) \Theta\left(\sum_{j=1}^3 W_{1j} x_j+\theta\right)\sum_{j=1}^3 y_j\\
          (-1-\delta) \Theta\left(\sum_{j=1}^3 W_{2j} x_j+\theta\right)\sum_{j=1}^3 y_j\\
          (-1-\delta) \Theta\left(\sum_{j=1}^3 W_{3j} x_j+\theta\right)\sum_{j=1}^3 y_j
         \end{matrix}\right).
\end{equation}
In this case, although the vector field is not continuously differentiable at the domain boundaries, it is nevertheless continuous, so by Corollary \ref{corl:c0-vector-fields}, we find that the jump matrices are equal to the identity.  Thus we can establish that for this system the iPRC may be obtained by integrating the adjoint equation in the usual fashion (\textit{cf}.~\S \ref{sssec:piecewise_constant_LC} for an example with nontrivial jump matrices). With the iPRC established, we may numerically integrate Equations \eqref{eq:hfun} and \eqref{eq:phase_diff} to predict the synchronization dynamics of the coupled oscillators. The left column panels of Figure \ref{fig:hfuns} shows the numerically computed $H$ function and right-hand side of the phase difference equation. Figure \ref{fig:threshold} compares the time course of the phase difference $\psi$ predicted by integrating the 1D phase equation \eqref{eq:phase_diff} and the phase difference time course obtained by integrating the full 6D equations \yp{(Equation \eqref{eq:morrison_curto_full})}. The theory and simulations show excellent agreement.}

\subsubsection{\revone{Numerics and Phase Estimation}}
\label{sssec:phase_estimation}
\revonealso{To estimate the phase of an oscillator in the piecewise constant system, we use the geometrical phase angle of the state variables to approximate the phase of the full model. This approximation is reasonable because the system has a high degree of symmetry, and numerical tests reveal little error between this method and the more involved but general method below.

\revone{In the Morrison and Curto model (\S \ref{sssec:MorrisonCurto}), the system lacks the type of symmetry present in the piecewise constant system. Thus, we resort to a brute force lookup table method used in \cite{park2016weakly} to estimate the phase. }}

\revone{For all simulations of the Morrison-Curto model, we use the parameter values $\delta=.5$, $\ve=.25$, $\theta=1$, and $\alpha=.01$. To reduce the number of redundant plot points, we plot every 500 points.}

\revone{All figure code, including data files and files for data generation is available on GitHub at \texttt{https://github.com/youngmp/pwl\_iprc}.}

\section{Discussion}
\label{sec:discuss}

\subsection{\revonealso{Relation to the Saltation Matrix}}
\revonealso{In this paper we have derived the form of the discontinuity in the infinitesimal phase response curve at domain boundaries for a generic limit cycle arising in a piecewise smooth dynamical system \pt{in arbitrary finite dimensions}.  The solution of this problem is closely related to the solution of the variational problem for piecewise smooth systems, \pt{as we now discuss}. %as we now show.

First, to recapitulate our result, consider a trajectory $\gamma(t)$ transversely crossing domain boundary $\Sigma$ at point $\mathbf{p}$ at time $t=0$, exiting the old domain with velocity $\bm{F}^-$ and entering the new domain with  velocity $\bm{F}^+$.  For brevity, write $\mathbf{z}^-$ for the iPRC vector $\lim_{t\to 0^-}\mathbf{z}(t)$ just before the crossing, and $\mathbf{z}^+$ for the iPRC vector $\lim_{t\to 0^+}\mathbf{z}(t)$ immediately after the crossing.  In the interior of domain $k$, the iPRC vector evolves according to $\dot{\mathbf{z}}=-(D\mathbf{F}_k(\gamma(t)))^\intercal\mathbf{z}.$  The boundary crossing induces a linear jump condition $\yp{C}\mathbf{z}^+=\yp{D}\mathbf{z}^-$.
If $\bm{w}_1,\ldots,\bm{w}_{n-1}$ is any orthonormal basis for the tangent space of $\Sigma$ at $\bm{p}$, the matrices \yp{$C$} and \yp{$D$} are given 
\begin{equation}\label{eq:saltation}
\yp{C}=\left(\begin{matrix}\bm{F}^+&|& \bm{w}_1 &|& \cdots &|& \bm{w}_{n-1}\end{matrix}\right)^\intercal,\quad
\yp{D}=\left(\begin{matrix}\bm{F}^-&|& \bm{w}_1 &|& \cdots &|& \bm{w}_{n-1}\end{matrix}\right)^\intercal
,\end{equation}
\textit{cf.}~equation \eqref{eq:matrix-a-b}.

A linear jump condition arises as well in the solution of the variational problem for piecewise smooth systems.  The solution was obtained by  Aizerman and Grantmakher \cite{AizermanGantmakher1958JApplMathMech} and is discussed in the monographs   \cite{BernardoBuddChampneysKowalczyk2008PiecewiseSmoothDynSysBook,LeineNijmeijer2004book}.  Within the interior of the $k$th domain, the evolution of a small perturbation $\bm{u}(t)$ to a trajectory $\bm{y}(t)\approx\gamma(t)+\bm{u}(t)$ evolves, to linear order, as $\dot{\bm{u}}=D\bm{F}_k(\gamma(t))\bm{u}$. Writing $\bm{u}^-=\lim_{t\to 0^-}\bm{u}(t)$ and $\bm{u}^+=\lim_{t\to 0^+}\bm{u}(t)$, the jump in $\bm{u}$, upon $\gamma$ crossing the boundary $\Sigma$ at $\bm{p}$, is given by the \emph{Saltation matrix} $S$.  That is, $\bm{u}^+=S\bm{u}^-$, where the matrix $S=I+\Delta\bm{F}\hat{\bm{n}}^\intercal/\left(\hat{\bm{n}}\cdot\bm{F}^-  \right)$ is defined in terms of the difference in the vector fields, $\Delta\bm{F}\equiv \bm{F}^+-\bm{F}^-$, and the vector $\hat{\bm{n}}$ normal to the surface $\Sigma$ at $\bm{p}$.

Because the basis vectors satisfy $\bm{w}_i\perp\n$ for each $i$, the matrices $A$ and $B$ satisfy the equations
\begin{equation}\label{eq:ABdiscussion}
(S-I)B^\intercal=\left(\begin{matrix}\Delta\bm{F}&|&\bm{0}&|&\cdots&|&\bm{0}\end{matrix}\right)=-(S^{-1}-I)A^\intercal.
\end{equation}
Comparing \eqref{eq:saltation}-\eqref{eq:ABdiscussion}, 
notice that $\yp{C}^\intercal-\yp{D}^\intercal=(S-I)\yp{D}^\intercal$. It follows immediately that the jump matrix can be written in terms of the saltation matrix:
\begin{equation}
M=\yp{C}^{-1}\yp{D}=\left(S^{-1}\right)^\intercal.
\end{equation}
This relation is quite natural:  The saltation matrix accounts for changes in the fate of trajectories with perturbed initial conditions with respect to evolution \emph{forward} in time; it corrects the linearized forward (variational) equation.  The jump matrix obtained in this paper accounts for changes when tracking the prior history of trajectories that would have converged to common points on a limit cycle; it corrects the linearized backward equation for the  effects of boundary crossing.}  

\pt{Alternatively, consider the following elementary derivation.  
\begin{remark}\label{rem:alternative_saltation}
Assume the isochrons are given by a piecewise smooth function $\theta(\mathbf{u})$.  The phase difference between any two points  remains constant in time as the trajectories through those two points evolve.  Thus for any two points $\mathbf{x}$ and $\mathbf{x}+\mathbf{u}$,
\begin{equation}
\frac{d}{dt}\left(\theta(\mathbf{x}+\mathbf{u})-\theta(\mathbf{x})\right)= 0.
\end{equation}
Introducing the gradient $\mathbf{z}(t)\equiv\nabla\theta(\gamma(t))$ and considering an arbitrary small displacement $\mathbf{u}$, we see that $$\mathbf{z}\cdot\mathbf{u}=\textrm{const}$$ along trajectories, both within the interior of a region, and across the switching boundaries.  When $\mathbf{u}$ jumps, in general so does $\mathbf{z}$.   We seek a matrix $M$ satisfying  $\mathbf{z}_+=M\mathbf{z}_-$, where $\mathbf{z}_-$ is the iPRC just before the switching boundary and $\mathbf{z}_+$ is the iPRC just after.  Preserving  $\mathbf{z}\cdot\mathbf{u}$ requires
\begin{equation}
\mathbf{z}_-^\intercal\mathbf{u}_-=\mathbf{z}_+^\intercal\mathbf{u}_+=\mathbf{z}_-^\intercal M^\intercal S \mathbf{u}_-
\end{equation}
for arbitrary small perturbations $ \mathbf{u}$.  Therefore $M^\intercal S= I$, or $M=\left(S^\intercal\right)^{-1}$, as shown above.\end{remark}}

%, which is what Youngmin and I show in our paper (by constructing a basis on the switching surface) and Shirosaka et al also show (although by a rather complicated argument).  Also Coombes uses a similar argument for a specific model in a paper with Thul and Wedgwood. }

\subsection{\ympe{Related Literature}}
\label{ssec:related_literature}
%\ympe{The discussion of saltation matrices is particularly relevant as there exist several works that use this matrix to derive relevant results for piecewise linear and piecewise smooth systems. We discuss these works briefly and their relation to our results.}

\pt{Saltation matrices, or closely related constructions, appear in the analysis of phase response curves in a handful of papers that we now review.}  

\pt{We first derived the general form of the jump condition for the infinitesimal phase response curve of a continuous limit cycle trajectory in arbitrary dimensions in \cite{park2013infinitesimal} without reference to saltation matrix methods. Subsequently, }\ympe{\cite{shirasaka2017phase} used the saltation matrix to account for jumps in the iPRC for general hybrid systems, allowing for discontinuous solutions. \yp{In fact, our result is similar and complementary to \cite{shirasaka2017phase}. In \cite{shirasaka2017phase} and the present study, we require differentiability of the isochron excluding boundaries, and directional differentiability of the isochron within boundaries. This property is shown to be true in \cite{shirasaka2017phase}, justifying Assumptions \ref{a:limit_cycle}--\ref{a:direct_deriv}. The studies diverge at this point: \cite{shirasaka2017phase} reformulates the classic phase reduction approach of Kuramoto \cite{KuramotoBook} as a Filippov system, allowing the application of classic nonsmooth theory. They then analyze the effects of weak, periodic forcing on oscillator entrainment. Our derivation of the jump matrix $M$ and its relation to the saltation matrix \pt{employs} a first principles approach \pt{independent of the} classic nonsmooth theory. We then consider examples of weakly coupled oscillators (as opposed to periodically forced oscillators), and we provide a \pt{rudimentary} derivation of the interaction function \eqref{eq:phase_diff} for weakly coupled oscillators with Filippov dynamics.}}

Other results that derive iPRCs or use the saltation matrix to determine stability of coupled oscillators cover special cases (the most common cases are a combination of planar systems, piecewise linear systems, and relaxation oscillators).  \pt{In cases in which the vector field is continuous across the switching boundary (but not differentiable across it) the saltation and jump matrices are trivial, and can be neglected.  This situation holds in the original piecewise linear version of the Fitzhugh-Nagumo model studied by McKean \cite{McKean1970AdvMath} and later incorporated into network models \cite{Coombes:2008:SIADS}.}

In \cite{CoombesThulWedgwood2012PhysD}, the authors analyze a network of piecewise linear integrate-and-fire models with adaptation. They calculate the discontinuities of the iPRC \yp{using a method equivalent to the saltation matrix approach}. The phase-locking analysis of this paper does not require the coupling strength to be small and \yp{and the solutions are discontinuous}, but requires all-to-all linear coupling between the planar oscillators. In contrast, our result \yp{requires continuous solutions} and allows for non-linear coupling of $n$-dimensional oscillators.

\pt{In some instances,} properties of the limit cycle solution allow a direct derivation of the iPRC using the normalization condition \eqref{eq:normalization} \cite{coombes2001phase,izhikevich2000phase}. In these papers, the systems produce discontinuous solutions in the relaxation limit. The size of the discontinuity, along with the normalization condition, determines the discontinuous iPRC.

The saltation matrix is more commonly used to derive equations distinct from the iPRC, but with similar goals in mind. In \cite{coombes2016synchrony}, the authors derive the master stability function for piecewise linear planar oscillators using the saltation matrix as part of their derivation. They use the saltation matrix to determine the \yp{Floquet} exponents, which determine stability of oscillators in a synchronous state. Again, coupling between oscillators may be strong, but the coupling is linear and the oscillators are required to be identical and the stability analysis only holds for determining stability of the synchronous state. The authors remark, however, that deriving a master stability function for nonlinear coupling and higher dimensional models is possible in future work.

\subsection{\revonealso{Limitations of the Method}}

Our results apply broadly, because many systems have the structure we discuss. Nevertheless some caveats are in order. 

The iPRC does not always  capture the response to a stimulus. In situations in which the linear approximation to the asymptotic phase function breaks down, for instance when the stimulus drives the oscillator's trajectory close to the stable manifold of a saddle point on the boundary of the basin of attraction, mechanisms such as shear-induced chaos can lead to complicated responses to periodic forcing that cannot be predicted \textit{via} iPRC analysis.  This scenario can arise near the homoclinic bifurcation in the Morris-Lecar model, for example \cite{LinWedgwoodCoombesYoung2012JMathBiol}.  \yp{In the limit that the periodic orbit is pushed towards a heteroclinic bifurcation the iPRC can diverge (\textit{cf}.~Figures 6-7), an effect discussed in \cite{ShawParkChielThomas2012SIADS}.} Nevertheless, in many systems the iPRC  plays an important role in understanding oscillator entrainment and synchronization. 

In the examples we consider, the decomposition of the vector field into piecewise linear domains is specified in the statement of the original system. The approximation of limit cycles in a smooth system, with limit cycles in a piecewise linear system, has been investigated in a general setting  \cite{StoraceDeFeo2004CircuitsSystems}.  However, there is no \textit{a priori} heuristic for how to approximate an arbitrary nonlinear system with a piecewise linear approximation.   

\revonealso{Our method requires the phase function to be defined within some open neighborhood of the limit cycle (Assumption 4).  For some nonsmooth systems, it is possible that the phase function may not be well defined throughout the entire basin of attraction of the limit cycle. For example, Simpson and Jeffrey discuss piecewise smooth systems with a ``two-fold singularity" as a mechanism for finite time desynchronization of a limit cycle oscillator \cite{SimpsonJeffrey2016PRSA}.  In their example, the basin of attraction includes regions admitting sliding solutions that remain on the boundary surface for finite times, leading to nonuniqueness of solutions, and hence nonuniquness of the phase function in a portion of the stable manifold of the limit cycle, (although uniqueness is guaranteed within a neighborhood of the limit cycle by the transverse flow condition).  Our construction does not apply to such regions of the basin of attraction. }

\subsection{\revonealso{Further Applications}}
\label{ssec:further_applications}
The extension of results from classical dynamical systems to nonsmooth systems is an active area of research with applications in a wide variety of contexts.  
Carmona \textit{et al.}~studied a canonical form for limit cycles in planar PWL dynamical systems with two regions \cite{carmona_etal_2013}, using Melnikov methods to study existence and bifurcations of limit cycles. Ponce \textit{et al.}~studied bifurcations leading to limit cycles in PWL planar systems in \cite{ponce_etal_2013}. Existence of limit cycles has been shown for planar PWL systems with two regions in  \cite{huan_etal_2012} and \cite{LlibrePonce2012DCDISSBAA}, for planar PWL systems with an arbitrary but finite number of separate regions in \cite{gaiko_van_horssen_2009}, and for a PWL system in $\mathbb{R}^4$ with three regions in \cite{cheng_2013}. Stability of piecewise linear limit cycles in $\mathbb{R}^n$ with $m+1$ regions is analyzed in \cite{lin_etal_2003} using Poincar\'e map techniques.

The review \cite{lin_etal_2009} discusses necessary and sufficient conditions for asymptotic stability of piecewise linear systems in $\mathbb{R}^n$;  \cite{ma_et_al_2013} adapts Lyapunov functions   for piecewise linear systems.

Limit cycles in piecewise linear systems occur not only in biology but also in control engineering  \cite{pettit_1996}.  Piecewise linear systems arise naturally in anti-lock braking systems \cite{morse_1997}, which are themselves engineered to produce limit cycle oscillations \cite{pettit_wellstead_1995}. Piecewise smooth relay feedback systems, first used in heating \cite{hawkins_1887} and more recently in (PID) control \cite{johansson_2002}, can give rise to limit cycles. The exact conditions for limit cycle existence in relay feedback systems is given in \cite{tsypkin_1984}. Many piecewise linear biological models exist as well. A piecewise linear version of the Fitzhugh-Nagumo model (also called the McKean model) and a piecewise linear version of the Morris-Lecar model are studied in \cite{Coombes:2008:SIADS}. The authors in \cite{bizzarri_etal_2007} convert the Hindmarsch-Rose model into a piecewise linear version and analyze its stability. Gene regulatory networks are a classic example of piecewise linear models exhibiting limit cycle oscillations \cite{GlassPasternack1978JMB}, and a subject of ongoing research.
For instance, \cite{edwards_gill_2003} analyzes the stability of synchronous periodic solutions, assuming weak symmetric coupling of two Glass networks. Rigorous investigations of Glass networks have considered them within the framework of differential inclusions \cite{acary_etal_2014,machina_ponosov_2011}. To facilitate construction of networks with customized dynamics \cite{zinovik_etal_2010}\yp{,} systematically classified cyclic attractors on Glass networks with up to six switching units.
\pt{Walsh and colleagues studied periodic orbits in a discontinuous vector field as a model of cycling phenomena in glacial dynamics \cite{walsh2016periodic}.}
In summary, there is a rich collection of contexts in which piecewise linear and piecewise smooth systems arise.

\section{Conclusion}
The infinitesimal phase response curve provides a linear approximation to the geometry of the asymptotic phase function in the  vicinity of a stable limit cycle. The classical method for obtaining iPRCs from the adjoint \cite{ErmentroutKopell:1991:JMathBio} breaks down with nonsmooth dynamics because the Jacobian may not be well defined at the domain boundaries. In this paper we have introduced a general theory for the iPRCs for limit cycles arising in piecewise smooth systems, provided the limit cycle intersects the domain boundaries transversely, the boundaries are smooth at the points of intersection, \yp{and that classic properties of the phase function exist at least within a boundary of the limit cycle (Assumptions \ref{a:limit_cycle}--\ref{a:direct_deriv})}. In the case of piecewise smooth systems which are also continuous across the domain boundaries, we obtain continuous iPRCs.  Discontinuities in the iPRCs may arise when the vector field is discontinuous across domain boundaries, and our analysis provides the explicit form of the discontinuity, \revonealso{in the form of a linear matching condition related to the classical saltation matrix construction.}  Our  results are consistent with, and extend, existing findings \yp{of iPRC calculations}, such as \cite{Coombes:2008:SIADS}.  Because piecewise smooth and piecewise linear systems arise in a wide variety of fields, from biology to engineering, our analysis has the potential for broad application.

\appendix

\section{Proofs of the Main Results}\label{section:proofs-of-main-results}
\subsection{Proof of Theorem \ref{theorem}}\label{section:proof-of-theorem}
From assumptions \ref{a:limit_cycle}--\ref{a:phase},
$\revone{\bm{\gamma}}$ is a piecewise smooth limit cycle that
admits a phase function $\theta(\revone{\bm{x}})$ throughout the basin of attraction (B.A.), such that $d\theta/dt=1/T$ throughout the B.A.  Also by assumption, $\theta$ is differentiable ($C^1$) in the interior of each region and continuous ($C^0$) at the boundaries between successive regions (assumptions \ref{a:isochron} and \ref{a:phase_deriv}).  

For a piecewise smooth dynamical system satisfying hypotheses H1--H4 and assumptions \ref{a:limit_cycle}--\ref{a:sigma}, the crossing point $\bdypt_{k+1}$ lies in an $n-1$ dimensional surface $\Sigma_{k+1}$ that is $C^1$ within a ball $B(\bdypt_{k+1},c)$ of radius $c$, and admits a unique unit length normal vector $\n$ oriented in the direction of the flow.  By Gram--Schmidt, we may construct $n-1$ orthonormal vectors  $\hat{\bm{w}}_i$ spanning the hyperplane tangent to $\Sigma_{k+1}$ at $\bdypt_{k+1}$ orthogonal to $\n$ for all $i=1,\ldots,n-1$.

   \newcommand{\tvi}{\hat{\bm{w}}_{i}}

Introduce local coordinates $u$ on $B(\bdypt_{k+1})\cap \Sigma_{k+1}$ such that $u=0$ corresponds to the point $\bdypt_{k+1}$.
Let $\theta_k$ denote the phase function $\theta$ within the $k$th domain.    
Although the gradient $\nabla\theta$ is not defined at points on the boundary $\Sigma_{k+1}$, we have well defined directional derivatives
\begin{align}
D_{\mathbf{w}}(\theta_k)(\mathbf{x}_k) &= \lim_{h\to 0}\frac{1}{h}\left(\theta(\mathbf{x}_k+h\mathbf{w})-\theta(\mathbf{x}_k)\right)\\
D_{\mathbf{w}}(\theta_k)(\mathbf{x}_{k+1}) &= \lim_{h\to 0}\frac{1}{h}\left(\theta(\mathbf{x}_{k+1}+h\mathbf{w})-\theta(\mathbf{x}_{k+1})\right)
\end{align}
for points $\mathbf{x}_k$ and $\mathbf{x}_{k+1}$ in the interior of region $k$ and region $k+1$, respectively.  Fixing a basis vector $\tvi$ in the plane tangent to $\Sigma_{k+1}$ at $\bdypt_{k+1}$ and taking the limits as $\mathbf{x}_k\to \bdypt_{k+1}$ and $\mathbf{x}_{k+1}\to\bdypt_{k+1}$, we have (Assumption \ref{a:direct_deriv})
 \begin{equation}
    \hat{\bm{w}}_i \cdot \nabla\theta_{k+1}(u) =\hat{\bm{w}}_i \cdot \nabla\theta_k(u) , \quad \forall i=1,\ldots,n-1
  \end{equation}
  (for simplicity, we use $\hat{\bm{w}}_i$ in place of $\tvi^{k+1}$). For $u=0$, the gradient of the phase function is evaluated \yp{on} the limit cycle.  Referring back to the notation of Eq.~\eqref{eq:iprc-limits} we have
  \begin{equation}\label{eq:tangent-direction-equivalence}
\hat{\bm{w}}_i\cdot \bm{z}_{k+1,0} = \hat{\bm{w}}_i\cdot \bm{z}_{k,t_k}, \quad \forall i=1,\ldots,n-1.
\end{equation}
  Eq.~\eqref{eq:tangent-direction-equivalence} provides $n-1$ independent linear equations for the  $n$ unknown values $\bm{z}_j$, $j=1,\ldots n$. To obtain an $n$th independent linear equation, let $m \in \{k,k+1\}$ and let $\bm{x}_m(t)$ be a trajectory in the basin of attraction of vector field $\bm{F}_m$.  By the chain rule,
\begin{equation}
\begin{split}
\frac{d\theta_m}{dt} &=\left [ \nabla\theta_m(\bm{x}_m(t)) \right ] \cdot \frac{d}{dt}\bm{x}_m(t)\\
&= \bm{F}_{m}(x_{m}(t))\cdot\nabla\theta_m(\bm{x}_{m}(t)).
\end{split}
\end{equation}
For a trajectory $\bm{x}_m(t)$  on the limit cycle, i.e., when $\bm{x}_m(t) = \bm{\gamma}_m(t)$, we have by definition of the iPRC (\cite{ErmentroutTerman2010book}),
\begin{equation}
\frac{d\theta_m}{dt} = \bm{F}_{m}(\revone{\bm{\gamma}}_{m}(t))\cdot \bm{z}_{m}(t).
\end{equation}
Recalling that $d\theta/dt=1/T$, taking the one-sided limits,
\begin{equation}\label{eq:phase-limits}
\begin{split}
\lim_{t \rightarrow 0^+} \bm{F}_{k+1}(\revone{\bm{\gamma}}_{k+1}(t))\cdot \bm{z}_{k+1}(t) &= \bm{F}_{k+1,0}\cdot \bm{z}_{k+1,0},\\
  \lim_{t \rightarrow t_k^-} \bm{F}_{k}(\revone{\bm{\gamma}}_{k}(t))\cdot \bm{z}_k(t) &= \bm{F}_{k,t_k}\cdot \bm{z}_{k,t_k},
  \end{split}
\end{equation}
therefore
%yields the final equation relating $\bm{z}_k$ and $\bm{z}_{k+1}$ at the boundary,
\begin{equation}\label{eq:thetaflow}
\bm{F}_{k+1,0}\cdot \bm{z}_{k+1,0} = \frac{1}{T} =  \bm{F}_{k,t_k}\cdot \bm{z}_{k,t_k}.
\end{equation}
Eq.~\eqref{eq:thetaflow} asserts that the phase function advances at the same rate as a function of time everywhere, and in particular on both sides of the boundary point $\bdypt_{k+1}$.  
Combining Eq.~\eqref{eq:thetaflow} with Eqs.~\eqref{eq:tangent-direction-equivalence}
provides $n$ independent linear equations:
\begin{equation}\label{eq:n-iprc-equations}
\begin{split}
\bm{F}_{k+1,0}\cdot \bm{z}_{k+1,0} &=  \bm{F}_{k,t_k}\cdot \bm{z}_{k,t_k},\\
\hat{\bm{w}}_1 \cdot \bm{z}_{k+1,0} &= \hat{\bm{w}}_1 \cdot \bm{z}_{k,t_k},\\
\hat{\bm{w}}_2 \cdot \bm{z}_{k+1,0} &= \hat{\bm{w}}_2 \cdot \bm{z}_{k,t_k},\\
\vdots\\
\hat{\bm{w}}_{n-1} \cdot \bm{z}_{k+1,0} &= \hat{\bm{w}}_{n-1} \cdot \bm{z}_{k,t_k},\\
\end{split}
\end{equation}
which are equivalent to the equality expressed in Theorem \ref{theorem} in terms of the $n \times n$ matrices $\yp{C}_{k+1}$ and $\yp{D}_{k}$ as defined in the theorem.

In order to solve for the initial value of the iPRC of the $k+1$ portion of the limit cycle, $\bm{z}_{k+1,0}$, we must invert the matrix $\yp{C}_{k+1}$.  The invertibility of $\yp{C}_{k+1}$ is guaranteed because the vector field value $\bm{F}_{k+1,0}$ and the $n-1$ tangent vectors are linearly dependent if and only if $\bm{F}_{k+1,0} \in \rm{span}(\hat{\bm{w}}_1,\ldots,\hat{\bm{w}}_{n-1})$.  However, the vector $\bm{F}_{k+1,0}$ can not be written as a linear combination of every $\revone{\hat{\bm{w}}}_i$, because the vector $\bm{F}_{k+1,0}$ is always transverse to the boundary at the point $\bdypt_{k+1}$, by assumption.  Therefore, the vector $\bm{F}_{k+1,0}$ and the $n-1$ orthonormal basis vectors of the tangent hyperplane at the point $\bdypt_{k+1}$ are 
linearly independent, and the matrix $\yp{C}_{k+1}$ is invertible.

The boundary crossing point considered in the proof is arbitrary, so the proof applies to all boundary crossings of the limit cycle.
%, and .  Therefore, Theorem \ref{theorem} may be used to calculate all discontinuities in the iPRC for differential inclusions satisfying hypotheses \ref{h1}--\ref{h4} and assumptions \ref{a:limit_cycle}--\ref{a:direct_deriv}, and in the case of piecewise linear and discontinuous vector fields, derive explicit expressions for the discontinuous iPRC.
\qed

\subsection{Proof of Corollary \ref{corl:circuit}}
\label{section:proof-of-corollaryIII2}
We adopt the same assumptions as Theorem \ref{theorem}.  By the assumptions stated in Section \ref{section:definitions-and-hypotheses}, the limit cycle, $\revone{\bm{\gamma}}$, consists of $K$ distinct sections, each passing through a linear vector field $\bm{F}_1,\bm{F}_2,\ldots,\bm{F}_K$. 
The solution to the adjoint equation of the first region  (Eq.~\eqref{eq:adjoint-piecewise}) is 
\begin{equation}
\bm{z}_1(t) = e^{A_{1} t} \hat{\bm{z}}_{1,0},
\end{equation}
where $A_{1} = -DF_1(\revone{\bm{\gamma}}_1(t))^T$, the negative transpose of the Jacobian matrix $DF_1$ evaluated along the limit cycle $\revone{\bm{\gamma}}(t)$, $e^{A_{1} t}$ is a matrix exponential, and $\hat{\bm{z}}_{1,0}$ is an initial condition of the iPRC in the first region.  Because the vector field is linear within each region, the Jacobian matrix is piecewise constant.  The initial condition of the iPRC of the next region $\bm{z}_{2,0}$ may be written in terms of the initial condition of the iPRC of the first region:
\begin{equation}
\begin{split}
 \bm{z}_{2,0} &= M_{2} \bm{z}_{1,t_1}\\
 &= M_{2} e^{A_{1} t_1} \hat{\bm{z}}_{1,0}.
 \end{split}
\end{equation}
Similarly, the initial condition of the iPRC of the third region may be written as,
\begin{equation}
 \begin{split}
  \bm{z}_{3,0} &= M_{3} \bm{z}_{2,t_2}\\
  &= M_{3} e^{A_{2} t_2} M_{2} e^{A_{1} t_1} \hat{\bm{z}}_{1,0},
 \end{split}
\end{equation}
and so forth.  Upon traversing the $K$th region we return to $ \hat{\bm{z}}_{1,0}$, which must satisfy
\begin{equation}
 \hat{\bm{z}}_{1,0}=M_{1}e^{A_{K}t_{K}}\cdots M_{2}e^{A_{1} t_{1}} \hat{\bm{z}}_{1,0} =: B \hat{\bm{z}}_{1,0}.
\end{equation}
Therefore $\hat{\bm{z}}_{1,0}$ is a unit eigenvector of $B$.  Uniqueness of the unit eigenvector (up to multiplication by a constant) follows from the stability of the limit cycle (assumption \ref{a:limit_cycle}).  If $B\mathbf{q}=\mathbf{q}$ for another eigenvector $\mathbf{q}\not\in\rm{span}(\mathbf{\hat{\mathbf{z}}}_{1,0})$, an arbitrarily small initial condition could be found near $\bdypt_{1}$ that did not converge to $\revone{\bm{\gamma}}$.  

% Uniqueness of this eigenvector is guaranteed in the following way.  Because $\hat{\bm{z}}_{1,0}$ and $1$ are an eigenvector and eigenvalue pair, it follows that
%\begin{equation}
%(B-I) \hat{\bm{z}}_{1,0} = 0.
%\end{equation}
%In other words, $\hat{\bm{z}}_{1,0}$ belongs to the nullspace of the matrix $(B-I)$. If the nullspace of $(B-I)$ has dimension greater than one, then there is a vector $q$ that is not a scalar multiple of $\hat{\bm{z}}_{1,0}$ such that
%\begin{equation}
%(B-I)q = 0,
%\end{equation}
%so $q$ is another eigenvector of the matrix $B$ with unit eigenvalue, and $\hat{\bm{z}}_{1,0}$ is not unique.  Therefore, requiring that the dimension of the nullspace of $(B-I)$ be no greater than one guarantees uniqueness of the eigenvector associated with unit eigenvalue. If $\hat{\bm{z}}_{1,0}$ is an eigenvector of $B$, then the direction of the initial condition of the iPRC, $\hat{\bm{z}}_{1,0}$, is guaranteed to be unique by Corollary \ref{corl:circuit}. 
 
 Uniqueness of the magnitude of $\hat{\bm{z}}_{1,0}$ comes from Eq.~\eqref{eq:normalization}, which we recall to be,
\begin{equation}\label{eq:normalization-proof}
\bm{F}_{1,0}\cdot  \bm{z}_{1,0}=\frac{1}{T},
\end{equation}
where $\bm{z}_{1,0}$ is the unique initial condition of the iPRC of region 1.  The vector $\hat{\bm{z}}_{1,0}$ must be scaled by some constant $\alpha$ to be equivalent to the initial condition, $\bm{z}_{1,0}$.  We can calculate the scaling by use of Eq.~\eqref{eq:normalization-proof}:
\begin{equation}
\begin{split}
\bm{F}_{1,0} \cdot (\alpha \hat{\bm{z}}_{1,0}) &=\frac{1}{T}\\
 \alpha \bm{F}_{1,0} \cdot \hat{\bm{z}}_{1,0} &=\frac{1}{T}\\
 \alpha &=\frac{1}{T ( \bm{F}_{1,0}\cdot \hat{\bm{z}}_{1,0} )}.
 \end{split}
\end{equation}
Thus the unique initial condition of the iPRC, $\bm{z}_{1,0}$ is
\begin{equation}
\begin{split}
 \bm{z}_{1,0} &= \alpha \hat{\bm{z}}_{1,0}\\
 &= \frac{\hat{\bm{z}}_{1,0}}{T ( \bm{F}_{1,0}\cdot \hat{\bm{z}}_{1,0} ) }.
 \end{split}
\end{equation}
This concludes the proof of uniqueness for the initial condition of the iPRC for affine linear vector fields satisfying Theorem \ref{theorem}. \qed

\subsection{Proof of Corollary \ref{corl:eqn}}
\label{section:proof-of-corollaryIII3}
\ymp{Let $t$ denote global time, $T_k = \sum_{j=1}^{k} t_j$ where $t_k$ is the time of flight for region $k$, ($1\le k \le K$), and $A_k = -(DF_k)^T$, where $DF_k$ is the Jacobian matrix of the vector field $\bm{F}_k$. Beginning with region 1, the affine linear vector field assumption allows us to explicitly solve for the iPRC using matrix exponentials. We determine the initial condition of region 1, $\bm{z}_{1,0}$, using Corollary \ref{corl:circuit}. Then for any $0 \leq t < T_1$,
\begin{equation}
 \bm{z}(t) = e^{A_1 t}\bm{z}_{1,0}.
\end{equation}
This iPRC solution reaches the next boundary at time $t_1$, and takes the value $\bm{z}(t_1^-) = e^{A_1 t_1}\bm{z}_{1,0}$. To continue the solution into the next region, we compute the jump matrix $M_{2}$ using Theorem \ref{theorem} and apply it to the value $\bm{z}(t_1^-)$ to place the iPRC solution on the  initial condition on the boundary of region 2, $\bm{z}_{2,0} \equiv M_{2} e^{A_1 t_1}\bm{z}_{1,0}$. Using this initial condition, and for any $T_1 < t < T_2$, we compute the iPRC solution to be
\begin{equation}
 \bm{z}(t) = e^{A_2 (t-T_1)} z_{2,0} \equiv e^{A_2 (t-T_1)}  M_{2} e^{A_1 t_1}\bm{z}_{1,0}.
\end{equation}
By repeating this process into the final region $K$, the solution results in an alternating product of jump matrices and matrix exponentials for any $T_{K-1} < t < T_K$:
\begin{equation}
 \bm{z}(t) = e^{A_k (t-T_{K-1})}M_{K} e^{A_{K-1} t_{K-1}} \cdots  M_{2} e^{A_1 t_1}\bm{z}_{1,0}.
\end{equation}
This equation completes the proof.
\qed

\bl{\begin{remark}Because the iPRC solution is periodic and isolated, continuing this process from region $K$ into region 1 results in the solution
\begin{equation}
\begin{split}
 \bm{z}(t) &= e^{A_1 t} M_{K}e^{A_k (t_K)} \cdots M_{2} e^{A_1 t_1}\bm{z}_{1,0}\\
 &= e^{A_1 t} B \bm{z}_{1,0},
\end{split}
\end{equation}
 for  $nT \leq t < n T+T_1$, for any integer $n$.
The last line is a trivial substitution that follows from Corollary \ref{corl:circuit}.
\end{remark}
}}

\subsection{Proof of Corollary \ref{corl:c0-vector-fields}}
\label{section:proof-of-corollaryIII4}
If the vector fields are of class $C^0$ over boundaries, then the vector field coordinates $a_i$ and $b_i$ (see Eq.~\eqref{eq:matrix-a-b}) are equal for each $i$.  Therefore, the matrices $\yp{C}_{k+1}$ and $\yp{D}_k$ of Eq.~\eqref{eq:matrix-a-b} are identical, and because $M_{k+1}=\yp{C}_{k+1}^{-1}\yp{D}_k$, the matrix $M_{k+1}$ reduces to the identity matrix for each $k$. \qed

\section{\yp{Calculation Details for the Nominal Biting Model of \textit{Aplysia Califonica}}}\label{a:aplysia_details}
Recall Equation \eqref{eq:piecewise-linear-heteroclinic-full}:
\begin{equation*}
\frac{d\bm{r}}{dt} = \left \{ \begin{array}{cl}
  \begin{matrix}1-x -(y+a) \rho \\ y + a \\ (z-a)(1-\rho)\end{matrix},& \quad x \geq y+a, x \geq z-a,\\
  \vspace{5pt}\\
  \begin{matrix}(x-a)(1-\rho) \\ 1-y-(z+a) \rho  \\ z + a\end{matrix},& \quad  y > x-a, y \geq z+a,\\
  \vspace{5pt}\\
  \begin{matrix}x+a\\ (y-a)(1-\rho)\\1 -z - (x+a)\rho \end{matrix},& \quad z > x+a, z > y-a, \\
\end{array}\right .
\end{equation*}

In the domain, the three corners given by the standard basis vectors each have a saddle point Because the vector field is linear within each region, the stable manifold is a plane spanned by the two stable eigenvectors of the Jacobian for each region, and the unstable manifold is the (half) line in the direction of the unstable eigenvector. \ymp{The two stable eigenvectors and the unstable eigenvector for region 1 are, respectively,
\begin{equation}\label{eq:biting_eigenvectors1}
 (1, 0, 0), (0, 0, 1),\left (-\frac{\rho}{2}, 1, 0 \right ).
\end{equation}
The vectors for the saddle in region 2 are, respectively,
\begin{equation}
 (0, 1, 0), (1, 0, 0), \left (0, -\frac{\rho}{2}, 1 \right),
\end{equation}
and for region 3,
\begin{equation}\label{eq:biting_eigenvectors3}
 (0, 0, 1), (0, 1, 0), \left (1, 0, -\frac{\rho}{2}\right).
\end{equation}}

When $a > 0$ the heteroclinic cycle is broken, and the unstable manifold of each vector field $\bm{F}_k$ flows into the boundary surface between vector fields $\bm{F}_k$ and $\bm{F}_{k+1}$ (as opposed to flowing along the boundary edge when $a=0$, where there is a nonempty intersection of the unstable manifold of $\bm{F}_k$ and the stable manifold of $\bm{F}_{k+1}$).  The vector fields $\bm{F}_1$, $\bm{F}_2$, and $\bm{F}_3$, are shifted by vectors $s_1$, $s_2$, and $s_3$, respectively, where
\begin{equation}
\begin{split}
 s_1 &= (0,-a,a),\\
 s_2 &= (a,0,-a), \\
 s_3 &= (-a,a,0).
\end{split}
\end{equation}

%When the system is modified in this way, the heteroclinic cycle breaks and gives rise to limit cycles.

As in the example of the modified iris system, the limit cycle of this nominal piecewise linear SHC model is not analytically computable. The time of flight for each portion of the limit cycle, $t_k$, must be derived numerically for each $k$ and for fixed parameter values $a$ and $\rho$. The limit cycle coordinates are obtained numerically; we denote them  $\bdypt_{k+1} = (\eta_{k+1},\kappa_{k+1},\nu_{k+1})$, i.e., for the $k$th portion of the limit cycle, its initial value is the vector $\bm{\gamma}_{k,0} = (\eta_{k},\kappa_{k},\nu_{k})$.  We now calculate the jump matrices $M_{k+1}$ for each boundary $\bdypt_{k+1}$, beginning with region 1.

The normal vector at $\bdypt_{2}$ and its two tangent vectors, $\hat{\bm{u}}_{2}$ and $\hat{\bm{w}}_{2}$, are
\begin{equation}
\begin{split}
\n_{2} &=\left (-\frac{1}{\sqrt{2}},\frac{1}{\sqrt{2}},0 \right ),\\
\hat{\bm{u}}_{2} &= \left( 0,0,1 \right ), \\
\hat{\bm{w}}_{2} &= \left (-\frac{1}{\sqrt{2}},-\frac{1}{\sqrt{2}},0 \right ).
\end{split}
\end{equation}
By Theorem \ref{theorem}, the jump matrix $M_{2}$is
%
% \begin{equation}\label{eq:shc-m1}
% \begin{split}
% M_{2} = &\left( \begin{matrix}
% \left [(1-\rho ) \left(\eta _2-a\right) \right] & \left[ 1 -\rho  \left(a+\nu _2\right)-\kappa_2 \right] & \left[ a+\nu_2\right ]\\
% 0 & 0 & 1\\
% -\frac{1}{\sqrt{2}} & -\frac{1}{\sqrt{2}} & 0
% \end{matrix} \right )^{-1}\\
% &\left( \begin{matrix}
% \left[ 1-\rho  \left(a+\kappa _2\right)-\eta_2\right] & \left[a+\kappa_2\right] & \left[(1-\rho ) \left(\nu_2-a\right) \right]\\
% 0 & 0 & 1\\
% -\frac{1}{\sqrt{2}} & -\frac{1}{\sqrt{2}} & 0
% \end{matrix}
% \right )\\
% = &\frac{1}{\varsigma_1}\left(
% \begin{array}{ccc}
%  \eta _2+\psi_1  \kappa _2- \phi_1 & 1-a(\rho  +1)-2 \kappa _2-\phi_1 & \omega_1 \\
%  \mu_1+(\rho -2) \eta _2-\rho  \kappa _2+1 & \kappa_2-a (\rho -2)+\eta_2\psi_1 & -\omega_1 \\
%  0 & 0 & \mu_1 +\eta_2 \psi_1 -\kappa _2-\phi_1+1 \\
% \end{array}
% \right),
% \end{split}
% \end{equation}
% %
% where $\mu_1 = a(1-2 \rho)$, $\phi_1 = \rho \nu_2$, $\psi_1 = \rho-1$, $\varsigma_1 = \left( 1-2 a \rho +a+\eta _2 (\rho -1)-\kappa _2-\nu _2 \rho  \right)$ and $\omega_1 = \left(\rho 
%    \nu _2-a (\rho -2) \right )$.  The final two matrices, $M_{3}$ and $M_{1}$, are,
\begin{equation}\label{eq:shc-m1}
\begin{split}
M_{2} = \left(
\begin{array}{ccc}
 a_{11} & a_{12} & a_{13}\\
 a_{21} & a_{22} & -a_{13}\\
 0 & 0 & a_{33}
\end{array}
\right),
\end{split}
\end{equation}
where
\begin{equation}
\begin{split}
 a_{11} &= \eta_2 + (\rho-1)\kappa_2 - \rho \nu_2,\\
 a_{12} &= 1 - a(1+\rho) -2\kappa_2 - \rho \nu_2,\\
 a_{13} &= -a(\rho-2) + \rho \nu_2,\\
 a_{21} &= 1 + a(1 - 2\rho) + (\rho-2)\eta_2 - \rho \kappa_2,\\
 a_{22} &= \kappa_2 - a(\rho-2) + (\rho-1)\eta_2,\\
 a_{33} &= 1 + a(1-2\rho) + (\rho-1)\eta_2 - \kappa_2 - \rho
\end{split}
\end{equation}
The remaining jump matrices are
\begin{equation}\label{eq:shc-m2}
\begin{split}
M_{3} = \left(
\begin{array}{ccc}
 b_{11} & 0 & 0\\
 b_{21} & b_{22} & b_{23}\\
 -b_{21} & b_{32} & b_{33}
\end{array}
\right),
\end{split}
\end{equation}
where
\begin{equation}
\begin{split}
 b_{11} &= a(2\rho-1)+\rho  \eta _3-(\rho -1) \kappa _3+\nu _3-1, \\
 b_{21} &= a (\rho -2)-\rho  \eta _3,\\
 b_{22} &= \rho  \eta _3-\kappa _3-(\rho -1) \nu _3,\\
 b_{23} &= a(\rho+1)+\rho  \eta _3+2 \nu _3-1,\\
 b_{32} &= a(2 \rho-1)-(\rho -2) \kappa _3+\rho  \nu _3-1,\\
 b_{33} &= a (\rho -2)-(\rho -1) \kappa _3-\nu _3,
\end{split}
\end{equation}
and
\begin{equation}\label{eq:shc-m3}
\begin{split}
M_{1} = \left(
\begin{array}{ccc}
 c_{11} & c_{12} & c_{13}\\
 0 & c_{22} & 0\\
 c_{31} & -c_{12} & c_{33}
\end{array}
\right),
\end{split}
\end{equation}
where
\begin{equation}
\begin{split}
 c_{11} &= a (\rho -2)-\eta _1-(\rho -1) \nu _1,\\
 c_{12} &= \rho  \kappa _1-a (\rho -2),\\
 c_{13} &= 2 \rho  a-a+\rho \eta _1-(\rho -2) \nu _1-1,\\
 c_{22} &= 2 \rho  a-a+\eta _1+\rho  \kappa _1-\rho  \nu _1+\nu _1-1,\\
 c_{31} &= \rho  a+a+2 \eta _1+\rho  \kappa _1-1,\\
 c_{33} &= -(\rho -1) \eta_1+\rho  \kappa _1-\nu _1.
\end{split}
\end{equation}

% %
% \begin{equation}\label{eq:shc-m2}
% M_{3} =\frac{1}{\varsigma_2} \left(
% \begin{array}{ccc}
%  2 \rho  a-a+\rho  \eta _3-(\rho -1) \kappa _3+\nu _3-1 & 0 & 0 \\
%  \omega_2 & \rho  \eta _3-\kappa _3-(\rho -1) \nu _3 & \rho  a+a+\rho 
%    \eta _3+2 \nu _3-1 \\
%  -\omega_2 & 2 \rho  a-a-(\rho -2) \kappa _3+\rho  \nu _3-1 & a (\rho
%    -2)-(\rho -1) \kappa _3-\nu _3 \\
% \end{array} \right)
% \end{equation}
% %
% where $\varsigma_2 = \left ( a (2 \rho -1)+\eta _3 \rho -\kappa _3 \rho +\kappa _3+\nu _3-1 \right ) $ and $\omega_2 = \left (a (\rho -2)-\rho  \eta _3 \right )$, and
% %
% \begin{equation}\label{eq:shc-m3}
% \begin{split}
%   M_{1}= \frac{1}{\varsigma_3}\left(
% \begin{array}{ccc}
%  a (\rho -2)-\eta _1-(\rho -1) \nu _1 & \omega_3 & 2 \rho  a-a+\rho 
%    \eta _1-(\rho -2) \nu _1-1 \\
%  0 & 2 \rho  a-a+\eta _1+\rho  \kappa _1-\rho  \nu _1+\nu _1-1 & 0 \\
%  \rho  a+a+2 \eta _1+\rho  \kappa _1-1 & -\omega_3 & -(\rho -1) \eta
%    _1+\rho  \kappa _1-\nu _1 \\
% \end{array}
% \right),
% \end{split}
% \end{equation}
%
 % \todo{see aplysia /papers /yxp30 /j-math-neuro /nominal \_biting \_adjoint.nb for calculation of the jump matrices}
%where $\varsigma_3 = (2 a \rho -a+\eta _1+\kappa _1 \rho -\nu _1 \rho +\nu _1-1)$ and $\omega_3 = (\rho  \kappa _1-a (\rho -2))$.
The solution to the adjoint equation for each region is
\begin{equation}\label{eq:shc-zi}
\begin{split}
\bm{z}_1(t) &= \left ( \begin{matrix} 
		    e^t & 0 & 0\\
		    \rho\sinh(t)  & e^{-t} & 0\\
		    0 & 0 & e^{t(\rho-1)}
		  \end{matrix} \right ) \bm{z}_{1,0},\\
\bm{z}_2(t) &= \left ( \begin{matrix}
		  e^{t(1- \rho)} & 0 & 0\\
		  0 & e^t & 0\\
		  0 & \rho\sinh(t)  & e^{-t}
		  \end{matrix} \right ) \bm{z}_{2,0},\\
\bm{z}_3(t) &= \left ( \begin{matrix}
		  e^{-t} & 0 & \rho\sinh(t) \\
		  0 & e^{t(\rho-1)} & 0 \\
		  0 & 0 & e^t
		  \end{matrix} \right ) \bm{z}_{3,0}.
\end{split}
\end{equation}
% 
% \section{Derivation of the iPRC for Smooth Systems}\label{iprc_derivation-appendix}
% The change in phase, $\Delta \theta$, in response to a perturbation of size $\varepsilon$ in the unit vector direction $\eta$, depends on the time at which the perturbation occurs.  To derive the infinitesimal phase response curve, when the asymptotic phase function $\theta$ is $C^1$, we expand the function to first order in $\varepsilon$.
% \begin{equation}\label{eq:iprc_derivation}
% \theta(\revone{\bm{\gamma}}(t) + \varepsilon \eta) = \theta(\revone{\bm{\gamma}}(t)) + \varepsilon D\theta(\revone{\bm{\gamma}}(t))\cdot \eta + O(\varepsilon^2)
% \end{equation}
% where $D$ denotes the directional derivative.
% %
% The change in phase for a small perturbation of magnitude $\varepsilon$ and direction $\eta$ is 
% %
% \begin{equation}
%  \Delta \theta = \theta(\revone{\bm{\gamma}}(t) + \varepsilon \eta) - \theta(\revone{\bm{\gamma}}(t)) = \varepsilon D\theta(\revone{\bm{\gamma}}(t))\cdot \eta + O(\varepsilon^2)
% \end{equation}
% %
% Taking the limit results in the iPRC
% %
% \begin{equation}
% \lim_{\varepsilon \rightarrow 0} \frac{\Delta \theta}{\varepsilon} = D\theta(\revone{\bm{\gamma}}(t))\cdot \eta =: \revone{\bm{z}}(t) \cdot \eta.
% \end{equation}
% %:
% \bl{For a classical derivation of the adjoint equation see \cite{BrownMoehlisHolmes2004NeComp,ErmentroutTerman2010book,SchwemmerLewis2012PRCchapter}.}

\section{Acknowledgments} This work was supported in part by NSF grant DMS-1413770 and NSF grant DMS-1010434.  {The authors thank Dr.~Yangyang Wang for suggesting several improvements to the manuscript.}
\bibliographystyle{dcu}
\bibliography{Aplysia,neuroscience,math,PJT,physics,reliability}

\end{document}